\newtheorem{thm}{Theorem}[section]
\newtheorem{cor}[thm]{Corollary}
\newtheorem{lemma}[thm]{Lemma}
\newtheorem{prop}[thm]{Proposition}
\newtheorem{quotethm}[]{Theorem}
\theoremstyle{definition}
\newtheorem{dfn}[thm]{Definition}
\newtheorem{note}[thm]{Notes}
\newtheorem{example}[thm]{Example}
\newtheorem{conjecture}[thm]{Conjecture}
\newtheorem{remark}[thm]{Remark}
\newtheorem{notation}[thm]{Notation}
\numberwithin{equation}{section}
\newcommand{\C}{\mathbb{C}}
\newcommand{\Z}{\mathbb{Z}}
\newcommand{\R}{\mathbb{R}}
\newcommand{\Q}{\mathbb{Q}}
\newcommand{\OO}{{\mathcal O}}
\newcommand{\NN}{\operatorname{N}}
\newcommand{\NS}{\operatorname{NS}}
\newcommand{\td}{\operatorname{Td}}
\newcommand{\A}{\mathcal{A}}
\newcommand{\stab}{\operatorname{Stab}}
\newcommand{\free}{\operatorname{free}}
\newcommand{\coh}{\operatorname{Coh}}
\newcommand{\Pic}{\operatorname{Pic}} 
\newcommand{\Coh}{\operatorname{Coh}}
\newcommand{\ch}{\operatorname{ch}}
\newcommand{\T}{{\mathbb T}}
\renewcommand{\P}{{\mathcal P}}
\newcommand{\xhat}{{\Hat x}}
\newcommand{\I}{{\mathcal I}}
\newcommand{\Plane}[1]{\Pi_{#1}}
\newcommand{\ray}[1]{\Xi_{#1}}
\newcommand{\spc}[1]{\Omega_{#1}}
\newcommand{\compo}{\raise2pt\hbox{$\scriptscriptstyle\circ$}}
\renewcommand{\leq}{\leqslant}
\renewcommand{\geq}{\geqslant}
\renewcommand{\Re}{\operatorname{Re}}
\renewcommand{\Im}{\operatorname{Im}}
\begin{document}

\title[Walls for Bridgeland Stability]{Computing the Walls Associated
  to Bridgeland Stability Conditions on Projective Surfaces}
\author[A Maciocia]{Antony Maciocia}
\address{Department of Mathematics and Statistics\\
The University of Edinburgh\\
The King's Buildings\\ Mayfield Road\\ Edinburgh, EH9 3JZ.\\}
\email{A.Maciocia@.ed.ac.uk}
\thanks{} 
\date{\today}
\begin{abstract}
We derive constraints on the existence of walls for Bridgeland
stability conditions for general projective surfaces. We show that in
suitable planes of stability conditions the walls are bounded and
derive conditions for when the number of walls is globally finite.
In examples, we show how to use the explicit conditions to locate
walls and sometimes to show that there are no walls at all.
\end{abstract}

\maketitle

\section*{Introduction}{}
Throughout this paper we let $X$ be a smooth complex projective surface.
The notion of stability condition for a triangulated category (and, in
particular, for the derived category of coherent sheaves on $X$) was
introduced by Bridgeland (see \cite{Bri07}). Bridgeland went on in
\cite{BrK3} to construct explicit families of stability conditions on
K3 and abelian surfaces. Arcara and  Bertram (\cite{Arcara07}) then showed
that these examples work for all smooth complex projective surfaces. We
shall be exclusively interested in these examples and their associated
walls. We will call these \textbf{basic} (Bridgeland) stability
conditions.

There are various reasons for studying such stability conditions now
well documented in the literature. For example, the preservation of
stability under Fourier-Mukai transforms correspond to the non-existence 
of walls. The G\"ottsche conjecture and Donaldson-Thomas invariants
are (conjecturally) related to wall crossing formula in such stability spaces.

One particular  use is to help us to compute
explicit moduli spaces of objects in the derived category of a
variety. At a boundary of the space of stability conditions the
stability condition becomes the more familiar one of (twisted)
Gieseker stability. We can construct these spaces using techniques of Geometric
Invariant Theory and often they are related to other moduli spaces
such as moduli spaces of connections satisfying certain differential
equations. But for fixed characteristic classes, the space of
stability conditions is divided into chambers and once we
cross a wall into a different chamber the objects representing points
of the corresponding moduli space need not be torsion-free sheaves or
indeed be sheaves at all. In order to better understand these moduli
spaces (including their existence) our first step is to locate the
walls separating the chambers. These walls arise from a combination of
numerical constraints and constraints coming from the hom sets of
objects in the derived category. Indeed, understanding better which
walls can exist should help us understand the derived category and
its sets of maps. The numerical constraints turn out to be
essentially quadratic in suitable coordinates in the stability
manifold $\stab(X)$ and so are (real) quadrics. If we restrict to
certain real half-planes in $\stab(X)$ given by pairs
$(\A_{\beta,\omega},Z_{\beta,\omega})$ (see section \ref{s:dist} for a
detailed description of this)
then these are conic sections
with centres on the boundary of the half-planes. We will prove that,
provided we choose the plane correctly, these are heavily constrained
to be \textbf{nested}. By this we mean that the conics lie entirely
inside each other so that they can never intersect or lie in two halves
of the plane divided by a straight line (see Figure 1 in the text for
a typical example). We prove:

\renewcommand{\thequotethm}{\ref{t:nestedwalls}}
\begin{quotethm}[Bertram's Nested Wall Theorem]
Let $X$ be a smooth complex projective surface. Pick an ample
$\omega\in\NS(X)$ and a real class $\beta\in\NS_\R(X)$.
Then for any class $v\in\NN(X)$ representing the Chern character of a
$\mu_\omega$-semistable sheaf or torsion sheaf on $X$ there is a
half-plane $\Pi=\Plane{\beta,\omega,1}$ 
in $\stab(X)$ which contains the basic Bridgeland stability condition
$(\A_{\beta,\omega},Z_{\beta,\omega})$ such that the intersection of
the walls with $\Pi$ are nested.

In particular, if the Picard Group $\rho(X)=1$ the walls in the space
of basic stability conditions are nested.
\end{quotethm}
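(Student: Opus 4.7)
The plan is to introduce explicit real coordinates $(s,t)\in\R\times\R_{>0}$ on the half-plane $\Plane{\beta,\omega,1}$ so that $(s,t)$ corresponds to the basic stability condition $(\A_{\beta+s\omega,t\omega},Z_{\beta+s\omega,t\omega})$, with the given condition sitting at $(0,1)$. Using
\[
Z_{\beta+s\omega,\,t\omega}(E)=-\int_X e^{-(\beta+s\omega)-it\omega}\ch(E),
\]
the real and imaginary parts of $Z$ separate into expressions that are affine in $s$, $s^2$ and $t^2$ when evaluated on $\ch(E)$. This allows us to rewrite the Bridgeland slope $-\Re Z/\Im Z$ as a rational function in $(s,t)$.

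First I would write down the wall equation $W_{u,v}(s,t)=0$ expressing that $Z(u)$ and $Z(v)$ are real-proportional, where $v$ is fixed and $u$ ranges over potential destabilizing classes in $\NN(X)$. A direct calculation shows that, after clearing denominators, $W_{u,v}$ has the shape
\[
A_{u,v}(s^2+t^2)+B_{u,v}\,s+C_{u,v}=0,
\]
with coefficients given by intersection pairings between $\ch(u)$, $\ch(v)$, $\omega$ and $\beta$. The wall inside $\Pi$ is therefore either a circle with centre on the boundary line $t=0$, a vertical line (in the degenerate case $A_{u,v}=0$ which can only occur when certain $\ch_0$ terms vanish), or empty.

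The heart of the argument is to show that two walls $W_{u,v}$ and $W_{u',v}$ for different potential destabilizing classes either coincide or are disjoint in $\Pi$. I would argue by contradiction: if they met at a point $(s_0,t_0)\in\Pi$, then at that stability condition $u$, $u'$ and $v$ share a common Bridgeland phase, and an appropriate rational linear combination of $u$ and $u'$ produces a class $w\in\NN(X)_\Q$ with $Z_{s_0,t_0}(w)=0$. Unpacking this vanishing gives a numerical identity which, combined with the Bogomolov inequality applied to $v$ (available by the hypothesis that $v$ is the Chern character of a $\mu$-stable sheaf or a torsion sheaf) and the Hodge index theorem on $\NS_\R(X)$, forces $w$ to be numerically proportional to $v$. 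Hence $u$ and $u'$ already defined the same wall, and two disjoint conic arcs sharing the centre axis $t=0$ are automatically nested. The main obstacle is extracting the correct positivity: one must treat the $\mu$-stable torsion-free case and the torsion sheaf case separately, since when $\ch_0(v)=0$ the quadratic part of the slope expression collapses and the Bogomolov-style inequality has to be replaced by a direct inspection involving $\omega\cdot\ch_1(v)$.

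The $\rho(X)=1$ statement then drops out: any real $\beta$ and ample $\omega$ are proportional to a fixed generator of $\NS(X)$, so the whole space of basic stability conditions is exhausted (up to the standard rescaling symmetry) by a single half-plane of the form $\Plane{\beta,\omega,1}$, and the nesting obtained above globalizes to all walls in that space.
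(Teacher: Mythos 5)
Your setup agrees with the paper's: in the coordinates $(s,t)$ on $\Plane{\beta,\omega,1}$ the wall equation does take the form $A(s^2+t^2)+Bs+C=0$, so each (pseudo-)wall is a semicircle centred on $t=0$ (or a vertical line in the degenerate case $\mu_\omega(w)=\mu_\omega(v)$). But the core of your nesting argument has two genuine gaps. First, the step ``$Z_{s_0,t_0}(w)=0$ together with Bogomolov and Hodge index forces $w$ proportional to $v$'' is false: $Z_{s_0,t_0}$ is a linear map $\NN_\R(X)\to\C$, so its kernel has real dimension $\rho(X)\geq 1$ and consists of classes that are \emph{not} proportional to $v$ (indeed $Z(v)\neq0$ since $v$ is the class of an object of the heart). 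So from $w=u-\lambda u'\in\ker Z_{s_0,t_0}$ you cannot conclude that $u$ and $u'$ cut out the same wall by this route. The mechanism that actually works, and which the paper uses, is different: one completes the square to get centre $C$ and radius $R$ and proves (Lemma \ref{l:radiuscentre}) that $R^2=(C-y_1/x)^2-F$ where $F$ depends only on $v$, $\omega$, $\gamma$, $u$ and satisfies $F\geq0$ by Bogomolov and Hodge index. Since the condition that a circle of this one-parameter family pass through a fixed $(s_0,t_0)$ is \emph{linear} in $C$ (the $C^2$ terms cancel), there is a unique such circle through each point of the upper half-plane, and distinct walls cannot meet. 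Your proposal never isolates this ``radius is a function of the centre'' fact, which is the actual content of the theorem and the reason the particular plane $\Plane{\beta,\omega,u}$ (with the $\gamma$-component frozen) must be chosen; the paper gives an example of a differently sliced plane where nesting fails.

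Second, even granting non-intersection, your final claim that ``two disjoint conic arcs sharing the centre axis $t=0$ are automatically nested'' is false: two disjoint semicircles centred on the $s$-axis can sit side by side, with neither contained in the other, and the paper's definition of nested explicitly excludes this. Ruling it out requires the additional monotonicity argument of Proposition \ref{p:nestedwalls}: using $R=\sqrt{(C-y_1/x)^2-F}$ with $F\geq0$ one checks that $C+R$ and $C-R$ are monotonic functions of $C$, so the left and right endpoints of the semicircles move coherently and the circles are totally ordered by inclusion. Your treatment of the $x=0$ (torsion) case and the $\rho(X)=1$ reduction are in the right spirit, but as written the proof does not establish nesting.
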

This theorem was conjectured by Aaron Bertram and there was already a
lot of evidence for it in the Picard rank 1 case on $\mathbb{P}^2$ and
on abelian surfaces (see \cite{MacMeachan} for the abelian surface
case). The key ingredients are the Bogomolov inequality and the Hodge Index
Theorem (just as in \cite{Arcara07}) applied to $v$.

The proof uses only the numerical constraints (we will call the
resulting walls \textbf{pseudo-walls} in what follows) and actually
proves the stronger result that the
radii of the circles depends only on the position of the centre of the
circles. It also follows from the analysis that given a Chern
character $v$ there exists a real number $C_0$ such that every wall
must cross the line $s=C_0$. This is particularly helpful to help us
locate walls by providing bounds on the Chern classes of destabilizing
objects. The number $C_0$ is given by the formula
\[\frac{c_1(v)\cdot\omega}{r(v)\omega^2}-\sqrt{F},\]
where $F$ is given by Equation \ref{e:F}. $F$ depends only on $v$ and the
choice of plane $\Pi_{\beta,\omega}$. In the special case of
Picard rank $1$, $F=\Delta(v)/(r(v)^2\omega^2)$, where $\Delta(v)$ is the
discriminant of $v$. More generally, without the condition on the
Picard rank, we show that when $F=0$ there can be no non-trivial walls at
all (see Theorem \ref{t:Fis0}). In other words, when $F=0$, a Bogomolov critical
$\mu_\omega$-semistable sheaf 
must always be $\mu_Z$-stable for any basic Bridgeland stability
condition $(\A,Z)$ in the plane $\Pi_{\beta,\omega}$.

Using our constraints on the walls and the fact that walls must correspond
to actual destabilizing objects we show that the radii of the walls are
always bounded above (and we give explicit computable
bounds). More precisely, we shall show:

\renewcommand{\thequotethm}{\ref{t:bound}}
\begin{quotethm}
Let $\beta$ be any class in $N(X)_{\mathbb{R}}$,
$\omega$ an ample class and $u\in\mathbb{R}$.
Fix a Chern character $v$ of a $\mu_\omega$-semistable sheaf or a torsion
sheaf. In the half-plane $\Plane{\beta,\omega,u}$ there are real
numbers $C_0$ and $R_0$ such that all of 
the walls corresponding to $v$ are contained in the semi-circle with centre $(C_0,0)$ and
radius $R_0$. In particular, the radii of the walls are bounded above
by $R_0$.
\end{quotethm}
Together with local finiteness of the walls we can then give
a new more direct proof of a result in \cite{LoQin} which
shows that for a given $\beta$ and $\omega$, there is some
$M\in\mathbb{R}$ such that there are no mini-walls (these are just the
intersection of walls with rays $t>0$ in our planes) for $t\in[M,\infty)$. We also
show that for all but one ray in our plane the number of walls is
actually globally finite.

We finish with a number of examples. Firstly, we show how to use $C_0$
to quickly show that, for certain Chern classes $v$, walls cannot
exist. We then look again at the
\cite{MacMeachan} situation of an irreducible principally polarized
abelian surface and rank $1$ sheaves. We also consider the same
situation but for a product abelian surface and this allows us to
construct an example to show that for other choices of plane, the
walls do not nest. We also show how to conclude global finiteness
results for walls for special choices of variety. We finish with a
result which gives an example of how to quickly deduce vanishing
results for walls. This final example looks at the 6 dimensional
O'Grady space of rank 2 charge 2 stable bundles on an abelian surface
which famously provides an exceptional compact simply-connected holomorphic
symplectic manifold. We show that there are no walls for the
associated Chern character.

\section{Stability Conditions and Their Walls}
 
\subsection{The numerics of a surface}
Let $X$ be a smooth complex projective surface and let $D(X)=D(\coh X)$ denote
its bounded derived category of coherent sheaves. We let $\NN(X)$
denote the \textbf{numerical Grothendieck group} of $X$. For a coherent sheaf
$A$ on $X$ we let $v(A)$ denote its image in $A$. This can be extended
to the derived category by defining $v(a)=\sum_{i}(-1)^iv(A^i)$, where
$a\in D(X)$ and we use the convenient abbreviated notation
$A^i=H^i(a)$.

To be more definite we shall identify $\NN(X)$ with $\Z\oplus
\NS(X)\oplus\Z[\frac{1}{2}]$, where the factors correspond to $r(A)$,
$c_1(A)$ and $\ch_2(A)[X]$. We shall freely write
$v(a)=(r(a),c_1(a),\ch_2(a)[X])$ corresponding to this
decomposition and abuse notation by writing
$\ch_2(E)=\ch_2(E)[X]$. Note that the rank of $\NS(X)$ is the
\textbf{Picard number} $\rho(X)$ of
$X$. We denote the intersection pairing on $\NS(X)$ by $(c,d)\mapsto
c\cdot d$. Its signature is $(1,\rho(X)-1)$. This is extended to a
pairing of signature $(2,\rho(X))$ on $\NN(X)$ by $(v,w)\mapsto
v_1w_3+v_3w_1+v_2\cdot w_2$.

For any $\Z$-module $M$ we write $\NN_M(X)=\NN(X)\otimes M$ and
$\NS_M(X)=\NS(X)\otimes M$. In particular, when $M=\Q$, $\R$ or
$\C$. We abuse notation and write $\alpha\in\NS_M(X)$ for the image of
some $\alpha\in NS_N(X)$, where $M$ is an $N$-module.
Suppose $\omega\in\NS(X)$ is an ample class and given any non-zero
element $\beta$ of $\NS_\R(X)$ we write $\beta=b\omega+\gamma$
for the unique element $\gamma=\gamma(\beta)$ of $\NS_\R(X)$ which is orthogonal
to $\omega$. We let $\tilde\gamma$ denote a choice of element of
$\NS(X)$ which maps to $\gamma$ in $\NS_\R(X)$. We can play the same
game in $\NS_\Q(X)$ but in that case there is a canonical choice for
$\tilde\gamma$ given by $n\gamma$ for the least positive integer
$n$. This trick is also used in Kawatani \cite{Kawat} to analyse
slightly different stability conditions.

For a fixed ample class $\omega\in\NS(X)$ we let
$\mu_\omega(v)=c_1(v)\cdot\omega/r(v)$ for any vector $v\in\NN_R(X)$,
interpreting it as $\pm\infty$ when $r(v)=0$. We let
$\mu_\omega(a)=\mu_\omega(v(a))$ for an object $a\in D(X)$. As is well known,
any sheaf $E\in\coh(X)$ has a unique Harder-Narasimhan filtration by
$\mu_\omega$-semistable sheaves $\{E_i\}_{i=1}^n$ with
$\mu_\omega(E_i)>\mu_\omega(E_{i+1})$. We write
$\mu^{+}_{\omega}(E)=\mu_{\omega}(E_1)$ and
$\mu^{-}_{\omega}(E)=\mu_{\omega}(E_n)$.

\subsection{Distinguished subvarieties of $\stab(X)$}\label{s:dist}
Now, fix an ample class $\omega\in\NS(X)$ and another class
$\beta\in\NS_\R(X)$. Then we can
define an abelian subcategory  $\A_{\beta,\omega}$ of $D(X)$ by the
following tilt with respect to  $\coh(X)$:
\[
\begin{split} T_{\beta,\omega}=&\{E\in\Coh(X):E\text{ is torsion or
  }\mu^{-}_\omega(E)>\beta\cdot\omega\}\\ 
F_{\beta,\omega}=&\{E\in\Coh(X):E\text{ is torsion-free and }\mu^{+}_{\omega}(E)\leq\beta\cdot\omega\}
\end{split}
\]
Notice that, for any $t>0$, $\A_{\beta,t\omega}=\A_{\beta,\omega}$.
We can make these abelian subcategories into a Bridgeland stability
condition by defining
\[Z_{\beta,\omega}(v)=-\exp(-\beta-i\omega)\cdot v\]
in $\NN_\C(X)$. The fact that $\Im Z\geq0$ and equal to zero only if
$\Re Z<0$ is an easy exercise using the Hodge Index Theorem and the
Bogomolov inequality. The details are discussed in
\cite[$\S2$]{Arcara07} and also \cite[$\S4$]{BayerMacri09}. In fact, we do not seem to need the
  Harder-Narasimhan property in our analysis. We shall refer to these as \textbf{basic
  stability conditions}. To be explicit note that
\[Z_{\beta,\omega}((r,\theta,z))=-z+\theta\cdot\beta-
\frac{r}{2}(\beta^2-\omega^2)+i(\theta-r\beta)\cdot\omega.\] 

We can use this to describe subvarieties of $\stab(X)$. Firstly
$(\A_{\beta,\omega},Z_{\beta,\omega})$ is a point of $\stab(X)$
(actually, a non-trivial fact). We can also consider the set
$(\A_{\beta,\omega},Z_{\beta,t\omega})$ as a ray, denoted
$\ray{\beta,\omega}$, in $\stab(X)$. Note 
that this starts at the open boundary of $\stab(X)$ (corresponding to $t=0$).
If $\beta\neq0$, we can also consider the ``plane'' (or more properly,
``half-plane'') $\{(\A_{s\beta,\omega},Z_{s\beta,t\omega}):s\in\R,\ t>0\}$. This turns
out to be hard to describe in general and we modify this to a family
of planes as follows. Write $\beta=b\omega+\gamma$ as above and then
consider the (usually)  half $3$-space of stability conditions
\[\spc{\beta,\omega}=\{(\A_{s\omega+u\gamma,\omega},Z_{s\omega+u\gamma,t\omega}):s,u\in\R,\ t>0\}.\]
We can now consider the $u$-indexed family of planes  given as the
subspaces of $\spc{\beta,\omega}$ where $u$ is fixed. We shall
denote each of these planes by $\Plane{\beta,\omega,u}$. 
 
\begin{note}
\begin{enumerate}
\item If $\gamma=0$ (so $\beta\propto\omega$) then
  $\spc{\beta,\omega}$ is just a half-plane $\Plane{\beta,\omega,0}=\Plane{\omega,\omega,0}$.
  If $\rho(X)=1$ then $\gamma$ must be zero. 
\item The ray $\ray{0,\omega}$ is contained in   $\Plane{\omega,\omega,0}$.
\item More generally, if $b=\beta\cdot\omega=0$ but $\beta\neq0$ then
  $\spc{\beta,\omega}=\spc{\beta+\omega,\omega}$ is still three
  dimensional.
\item Suppose $\alpha\in\NS_\Q(X)$ is non-zero and orthogonal to both
  $\gamma$ and $\omega$. Then
  $Z_{\beta,\omega}(v)=Z_{\beta,\omega}(v+\alpha)$.
\end{enumerate}
\end{note}
\begin{example}
Consider the case of a product abelian surface
$(\T,\omega)=(\T_1\times\T_2,\ell_1+\ell_2)$. Assume
$\NS(\T)=\langle\ell_1,\ell_2\rangle$. Then, over $\Q$ we have
$\NS_\Q(\T)=\langle\omega,\ell_1-\ell_2\rangle$. Then
$\spc{\ell_1-\ell_2,\omega}$ contains all of the possible basic
stability conditions.
\end{example}

\subsection{Walls and pseudo-walls}
Given a stability condition $(\A,Z)$, we say that an object $a\in\A$
is \textbf{semistable} if for all proper injections $b\to a$ in $\A$ we have
$\phi(b)\leq\phi(a)$, where $\phi(b)=\arg(Z(v(b)))$. Equivalently, we
can use the slope function:
\[\begin{split}\mu_Z(b)&=-\frac{\Re Z(v(b))}{\Im Z(v(b))}\\
&=\frac{z-\theta\cdot\beta+\frac{r}{2}(\beta^2-\omega^2)}{(\theta-r\beta)\cdot\omega},
\end{split}
\]
where $v(b)=(r,\theta,z)$ and $Z=Z_{\beta,\omega}$.

Now fix a non-zero class $v\in\NN(X)$ and for the following definition
we assume that $v=v(a)$ for some object $a$ of $D(X)$.
\begin{dfn}
We say that a class $w\in\NN(X)\setminus\langle v\rangle$ is
\textbf{critical} for $v$ if each of the following condition hold:
\begin{quote} There is a $\sigma\in\stab(X)$ and objects $a,b\in\A_\sigma$
  with an injection $b\to a$ in $\A_\sigma$  such that $v(a)=v$ and
  $v(b)=w$ and $\mu_Z(a)=\mu_Z(b)$. 
\end{quote}
If $w$ is critical for $v$ then we write $W^v_w\subset\stab(X)$ for
the set of such $\sigma$ and call it the \textbf{wall} corresponding
to $w$. We drop the $v$ from the notation if it is understood.
\end{dfn}

\begin{note}
If there is no $a\in D(X)$ such that $v(a)=v$ then there can be no
critical $w$. Generally, we assume that $v$ is the Chern character of
some object.
\end{note}
If a wall exists then it must be a real codimension 1 subspace of
$\stab(X)$. The complement of the union of all of the walls is a
disjoint union of chambers with the property that if for any $\sigma_0$
in a chamber and object $a$ is $\sigma_0$-stable, then it remains
$\sigma$-stable for all $\sigma$ in the same chamber. One would hope
that there is a good coarse moduli scheme for such objects and
crossing a wall would correspond to a surgery on these moduli
spaces. We do not pursue the details here but instead use this as
motivation to provide information about finding such walls. To help us
do this we introduce a weaker notion of \textbf{pseudo-wall}.
\begin{dfn}
We say that a class $w\in\NN(X)\setminus\langle v\rangle$
(respectively, $w\in\NN_\R(X)\setminus\langle v\rangle$) is
\textbf{$\Z$-critical} (respectively, \textbf{$\R$-critical}) for $v$ if
there is a stability condition $\sigma$ such that $\mu_Z(w)=\mu_Z(v)$.
\end{dfn}

\begin{remark}
It is clear that a wall is also an integral pseudo-wall and an
integral pseudo-wall is also a real pseudo-wall. But the converses are
generally false.
\end{remark}

\begin{remark}
The set of pseudo-walls (real or integral) do not depend very much on $X$.
\end{remark}

\begin{remark}\label{r:constrainv}
Note that we will be interested in these purely numerical constraints
on the existence of walls (embodied in the pseudo-walls) which are
well defined for any $v\in\NN_M(X)$ but in the applications we will 
always want $v$ to be a valid class corresponding to either a
$\mu_\omega$-semistable sheaf $E$ or a torsion sheaf $E$. The Bogomolov
inequality then implies that $c_1(v)^2\geq 2r(v)\ch_2(v)$. Note also
that if $v=v(E)$ then $E\in T_{s\omega+u\gamma,\omega}$ and so either
$r(E)=0$ or $\mu_\omega(E)>s\omega^2$.
\end{remark}

\begin{dfn}
We say that $v\in\NN_M(X)$ (for any $M$) is a \textbf{Bogomolov class}
if it satisfies the Bogomolov inequality:
\[c_1(v)^2\geq 2r(v)\ch_2(v).\]
\end{dfn}

\begin{remark}\label{r:twist}
Finally, in this section observe that twisting $v$ by $\omega$ shifts the
walls and pseudo-walls by  a unit to the right. 
\end{remark}

\section{Purely Numerical Constraints}
Now fix a Bogomolov class $v=(x,\theta,z)$  and an ample class $\omega$ in
$\NS(X)$. Assume for the moment that $\rho(X)>1$ and fix a class
$\gamma\in\NS_{\Q}(X)$ which is orthogonal to $\omega$. Write
$\NS_{\Q}(X)=\langle\omega\rangle\oplus\langle\gamma\rangle\oplus\langle\omega,\gamma\rangle^\perp$. 
Decompose $\theta=y_1\omega+y_2\gamma+\alpha$, where
$\alpha\in\langle\omega,\gamma\rangle^\perp$. If $\rho(X)=1$ then we
can usually just let $\gamma=0$. Let $Z=Z_{s\omega+u\gamma,t\omega}$. 
We shall also abbreviate $\omega^2=g$ and $\gamma^2=-d$. Note that
$d\geq0$ by the Hodge Index Theorem and $d=0$ if and only if $\gamma=0$.
Then 
\[\mu_Z(v)=\frac{z-sy_1g+uy_2d+\dfrac{x}{2}(s^2g-u^2d-t^2g)}{(y_1-xs)gt}.\]
Let $w=(r,c_1\omega+c_2\gamma+\alpha',\chi)$. Then 
\[
\mu_Z(w)-\mu_Z(v)=\frac{\begin{aligned}(y_1-xs)\Bigl(\chi-&sc_1g+uc_2d+
\frac{r}{2}(s^2g-u^2d-t^2g)\Bigr)\\&-(c_1-rs)\Bigl(z-sy_1g+uy_2d+\frac{x}{2}(s^2g-u^2d-t^2g)\Bigr)
\end{aligned}
}{(c_1-rs)(y_1-xs)gt}
\]
The numerator$\times2$ then equals (collecting terms in $s$ and $t$):
\begin{multline*}
g\left(xc_1-y_{{1}}r\right) {s}^{2}-
 2\left(\chi x-rz+c_{{2}}udx-y_{{2}}udr\right) s+ \\
g\left(xc_1-y
_{{1}}r\right) {t}^{2}-2zc_{{1}}+2c_{{2}}u
dy_{{1}}+x{u}^{2}dc_{{1}}-2y_{{2}}udc_{{1}}+2\chi y_{{1}}-r{u}^{2}dy_{{1}}.
\end{multline*}
Now assume $\mu_\omega(v)\neq\mu_\omega(w)$ (so we can divide by $c_1x-ry_1$).
Completing the squares we can write this as
\[g(xc_1-ry_1)\left((s-C)^2+t^2-D-C^2\right),\]
where
\[\begin{split}
C&=\frac{x\chi-rz +ud(xc_2-ry_2)}{g(xc_1-ry_1)}\\
D=&\frac {2zc_1-2c_{2}udy_{1}-xu^{2}dc_1+2y_{2}udc_{1}-2\chi y_{1}+r
    u^{2}dy_{1}}{g(xc_1-ry_1) }
\end{split}
\]
This shows that in the plane $\Plane{\gamma,\omega,u}$ using
  coordinates $(s,t)$, the walls are semicircles with centres $(C,0)$
  and radii $R=\sqrt{D+C^2}$. 

\begin{notation}
We will generally use $C$ and $R$ to denote the centre and radius of
any pseudo-wall in question. To make the parameters clearer we will
occasionally write $C=C_v(w)$ and $R=R_v(w)$ emphasizing the fact that
these are functions of a variable $w$ indexed by a parameter $v$.
\end{notation}

\begin{remark}\label{r:sameslope}
In the case where $\mu_\omega(w)=\mu_\omega(v)$ there are no square
terms and the wall is a line of the form $s=y_1/x$ which, for
completeness, we can view as a circle of infinite radius.
\end{remark}

We now want to show that the dependence of the radii on $w$ is
entirely through the centres.
\begin{lemma}\label{l:radiuscentre}
With the notation as above, if $x\neq0$,
\[D=\frac{ud(2y_2-ux)+2z}{gx}-\frac{2y_1}{x}C.
\]
\end{lemma}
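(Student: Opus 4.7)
The plan is a direct computation: form the quantity $D+\frac{2y_1}{x}C$ using the explicit formulas for $C$ and $D$ given just above the lemma, put everything over the common denominator $g(xc_1-ry_1)$, and watch the $w$-dependent terms (those involving $r$, $c_1$, $c_2$, $\chi$) cancel, leaving something depending only on $v$ and the plane parameter $u$.

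More concretely, I would organise the cancellations by grouping the numerator of $D+\frac{2y_1}{x}C$ according to which coordinate of $w$ appears. The contribution $-2\chi y_1$ in the numerator of $D$ cancels exactly with $\frac{2y_1}{x}\cdot x\chi$ coming from $\frac{2y_1}{x}C$, so the $\chi$-terms vanish. Similarly the $c_2$-terms cancel: $-2c_2udy_1$ against $\frac{2y_1}{x}\cdot udxc_2$. The $z$-terms combine into
\[2zc_1-\frac{2y_1}{x}rz \;=\; \frac{2z(xc_1-ry_1)}{x},\]
which has $xc_1-ry_1$ as a factor. The remaining terms, namely $-xu^2dc_1+2y_2udc_1+ru^2dy_1-\frac{2y_1}{x}\cdot udry_2$, factor as
\[ud(2y_2-ux)\left(c_1-\frac{ry_1}{x}\right)\;=\;\frac{ud(2y_2-ux)(xc_1-ry_1)}{x},\]
again pulling out $xc_1-ry_1$.

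Adding these contributions, the numerator becomes $\frac{(xc_1-ry_1)(2z+ud(2y_2-ux))}{x}$, and the factor $xc_1-ry_1$ cancels with the denominator $g(xc_1-ry_1)$. This gives exactly
\[D+\frac{2y_1}{x}C \;=\; \frac{ud(2y_2-ux)+2z}{gx},\]
which rearranges to the claimed formula.

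The only potential obstacle is bookkeeping: one has to be careful with signs in $D$ (the terms $-xu^2dc_1$, $+2y_2udc_1$, $+ru^2dy_1$ appear with specific signs in the numerator) and with the fact that multiplying by $\frac{2y_1}{x}$ and clearing $x$ is what forces $xc_1-ry_1$ to appear as a common factor. Since the hypothesis $x\ne 0$ is exactly what allows the division by $x$, and the assumption $\mu_\omega(v)\ne\mu_\omega(w)$ used previously guarantees $xc_1-ry_1\ne 0$ so the cancellation is legitimate, no further structural input is needed beyond the explicit formulas for $C$ and $D$.
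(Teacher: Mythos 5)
Your proposal is correct and follows exactly the route the paper intends: the paper's proof simply says to simplify $D+\frac{2y_1}{x}C$ and leaves the details to the reader, and your term-by-term cancellation (the $\chi$- and $c_2$-terms vanishing, the $z$- and $u$-terms each producing a factor of $xc_1-ry_1$) supplies precisely those details. The computation checks out.
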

\begin{proof}
Simplify $D+\dfrac{2y_1}{x}C$. The details are left to the
reader.
\end{proof}
When $x=0$ we do not have to consider the radii:
\begin{lemma}\label{l:zerorank}
If $x=0$ and $y_1>0$ then $C=\dfrac{z+duy_2}{gy_1}$. In particular, the centre is
independent of $w$.
\end{lemma}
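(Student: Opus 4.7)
The plan is to simply specialize the general formula for the centre
\[C=\frac{x\chi-rz+ud(xc_2-ry_2)}{g(xc_1-ry_1)}\]
derived just before \lemref{l:radiuscentre} to the case $x=0$, and to observe that all of the $w$-dependence cancels out of numerator and denominator together with an overall factor of $r$.

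More precisely, I first note that under the standing assumption $\mu_\omega(v)\neq\mu_\omega(w)$ (needed to divide by $c_1x-ry_1$), the value $x=0$ combined with $y_1>0$ forces $r\neq0$; otherwise $c_1x-ry_1=0$. (The case $r=0$ is the degenerate linear one handled by \remref{r:sameslope}.) Setting $x=0$ in the displayed formula gives
\[
C=\frac{-rz+ud(-ry_2)}{g(-ry_1)}=\frac{-r(z+udy_2)}{-gry_1}=\frac{z+duy_2}{gy_1},
\]
where the $r$ cancels because both numerator terms contain a factor of $r$ (the $x\chi$ and $xc_2$ terms vanish). This final expression involves only the invariants of $v$ (namely $z$ and the component $y_2$ of $\theta=y_1\omega+y_2\gamma+\alpha$), together with the fixed data $g,d,u$, and so is manifestly independent of $w=(r,c_1\omega+c_2\gamma+\alpha',\chi)$.

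There is no substantial obstacle here: the lemma is a direct algebraic specialization. The only point requiring a word of comment is the compatibility of the hypotheses $x=0$ and $y_1>0$ with the implicit assumption $xc_1-ry_1\neq 0$ used to extract $C$ in the first place, which as noted forces $r\neq 0$ and hence makes the cancellation legitimate.
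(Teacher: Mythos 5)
Your proof is correct and is exactly the computation the paper has in mind (it leaves the lemma as an "elementary exercise"): substitute $x=0$ into the general formula for $C$ and cancel the common factor of $r$, noting that $r\neq0$ is forced by the standing assumption $xc_1-ry_1\neq0$. Nothing further is needed.
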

\noindent We leave the elementary proof as an exercise for the reader. 

\begin{remark}\label{r:bogom}
So, if $x>0$,  the radius is
\[
R=\sqrt{D+C^2}
=\sqrt{\left(C-\frac{y_1}{x}\right)^2-F},
\]
where 
\begin{equation}\label{e:F}
F=\frac{d}{g}\left(u-\frac{y_2}{x}\right)^2+\frac{1}{x^2g}(y_1^2g-y_2^2d-2xz).
\end{equation}
When $v=v(E)$, for some sheaf $E$, the last term in $F$ is $\frac{1}{x^2g}$ times
$\Delta(E)-\alpha^2$, where $\Delta(E)$ is the discriminant of $E$. If we assume
$v$ is a Bogomolov class then this is non-negative ($\alpha^2\leq0$ by
the Hodge Index Theorem). Since, $d\geq0$ by the Hodge
Index Theorem we have $F\geq0$ for all $u$.
\end{remark}

\subsection{Real constraints}
We can now prove:
\begin{prop}\label{p:nestedwalls}
Suppose $v$ is a Bogomolov class. Then
in $\Plane{\gamma,\omega,u}$, the real pseudo-walls are
\textbf{nested}. By nested we mean that if $W_1$ and $W_2$ are two
distinct (semi-circular) pseudo-walls in $\Plane{\gamma,\omega,u}$ then
either $W_1$ is entirely contained in the interior of the semi-circle
$W_2$ or vice-versa. In particular, distinct pseudo-walls cannot intersect
and one pseudo-wall cannot be entirely outside any other pseudo-wall.
\end{prop}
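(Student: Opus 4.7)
The plan is to exploit the identity from Remark~\ref{r:bogom}: whenever $x = r(v) \neq 0$, every real pseudo-wall in $\Plane{\gamma,\omega,u}$ is a semicircle with centre $(C,0)$ and radius $R$ satisfying
\[
R^2 = (C - p)^2 - F,
\]
where $p := y_1/x$ and $F \geq 0$ depend only on $v$, $\omega$, $\gamma$ and $u$, but not on the destabilising class $w$ (the non-negativity of $F$ is the content of Remark~\ref{r:bogom}, combining the Bogomolov inequality with the Hodge Index Theorem). The easy case $x=0$ is handled at once by \lemref{l:zerorank}: there $C$ is independent of $w$, so all pseudo-walls are concentric and are trivially nested.

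For $x \neq 0$, the first step is to show that two distinct pseudo-walls cannot intersect in the open upper half-plane $t > 0$. Substituting the radius formula into $(s-C)^2 + t^2 = R^2$ and expanding yields
\[
s^2 + t^2 - p^2 + F = 2C(s-p),
\]
which is affine in $C$ for fixed $(s,t)$. Hence if $s \neq p$ the centre $C$ is uniquely determined by $(s,t)$, so two distinct pseudo-walls cannot share such a point. Specialising to $s=p$ collapses the equation to $t^2 = -F$, which is impossible for $t>0$ and $F \geq 0$. In particular, no pseudo-wall meets the vertical line $s=p$ in the open upper half-plane, and the inequality $R \leq |C-p|$ then forces each pseudo-wall to lie entirely in one of the open half-planes $s<p$ or $s>p$.

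Finally, I would deduce the nesting. Since distinct pseudo-walls are disjoint in the upper half-plane, they are either one inside the other or mutually exterior. For two walls whose centres lie on the same side of $p$, a one-line derivative check shows that $f(C) = C - \sqrt{(C-p)^2 - F}$ and $g(C) = C + \sqrt{(C-p)^2 - F}$ are strictly monotonic (decreasing and increasing respectively on $[p+\sqrt{F},\infty)$, with the analogous statement on $(-\infty, p-\sqrt{F}]$); this forces the semicircle with smaller $|C-p|$ to sit strictly inside the one with larger $|C-p|$. Walls whose centres lie on opposite sides of $p$ are automatically separated by the line $s=p$, which is the broader reading of ``nested'' described in the introduction. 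The main obstacle is essentially conceptual rather than technical: the key point is recognising that the radius depends on the centre through a single universal function (already packaged in \lemref{l:radiuscentre} and Remark~\ref{r:bogom}), which reduces the geometric nesting statement to the elementary algebraic identity above.
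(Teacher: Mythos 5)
Your argument follows the same route as the paper: the whole proof rests on the fact that $R$ is a function of $C$ alone via $R^2=(C-p)^2-F$ with $F\geq 0$ (Lemma~\ref{l:radiuscentre} and Remark~\ref{r:bogom}), the $x=0$ case is dispatched by Lemma~\ref{l:zerorank}, and the nesting for $x\neq0$ comes from the monotonicity of $C\pm R$ as functions of $C$. Your affine-in-$C$ identity $s^2+t^2-p^2+F=2C(s-p)$ is a clean way to package the non-intersection statement (it is essentially the ``unique pseudo-wall through each point'' observation of Remark~\ref{r:upperbound}), and your observation that no pseudo-wall meets $s=p$ for $t>0$ is correct.

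There is, however, one loose end. You allow the configuration of two pseudo-walls with centres on opposite sides of $p=y_1/x$ and declare it acceptable under a ``broader reading'' of nested. But the Proposition's own definition excludes exactly this: it asserts that one pseudo-wall cannot be entirely outside another, and two disjoint semicircles separated by the line $s=p$ are each entirely outside the other. So as written your proof does not establish the stated conclusion in that case; you need to show the case is vacuous rather than reinterpret it. The missing ingredient is Remark~\ref{r:constrainv}: a pseudo-wall for $v$ only exists where $v$ defines an object of the heart, which forces $\mu_\omega(v)>s\omega^2$, i.e.\ $s<y_1/x$ along any pseudo-wall, hence $C+R<p$ and in particular $C<p-\sqrt{F}$ for every pseudo-wall. (This is also implicit in the paper's proof, whose monotonicity claim for $C\pm R$ is only valid on one side of $p$ at a time, since $\frac{d}{dC}(C+R)=1+(C-p)/R$ changes sign across $C=p$.) With that one line added, all centres lie on the same side of $p$ and your monotonicity argument completes the proof exactly as in the paper.
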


\begin{proof}
When $x=0$ this follows immediately from Lemma \ref{l:zerorank} as
the centres of the circles are fixed. Note that $y_1>0$ unless $v$ is
the class of a sheaf supported in dimension $0$. So we suppose that
$x\neq0$. The result will follow from Lemma \ref{l:radiuscentre} if
we can show that $C+R$ and $R-C$ are both monotonic
increasing (or decreasing) as functions of $C$. Note that
\[\frac{d}{dC}(C\pm R)=1\pm \frac{1}{R}\left(C-\frac{y_1}{x}\right).
\]
But $F\geq0$ for all $u$ by the Remark \ref{r:bogom}. Hence,
$R\leq|C-y_1/x|$ and so if $C\neq y_1/x$, 
$\left|\dfrac{d}{dC}(C+R)\right|\geq0$ with equality if and only if
$F=0$.
Similarly $\left|\dfrac{d}{dC}(C-R)\right|\geq0$ for all $C\neq
y_1/x$ but has the opposite sign to the $C+R$ case. If $F=0$ then
\[\{C+R,C-R\}=\left\{2C-\dfrac{y_1}{x},\ \dfrac{y_1}{x}\right\}\]
and again we are done.
\end{proof}
Note that two pseudo-walls may be coincident. In fact, there will typically
be continuous families of $w$ giving coincident pseudo-walls (by solving
$C=C_0$ for $w$).

\begin{remark}\label{r:upperbound}
 Now $R>0$ exactly when
\[\left(C-\frac{y_1}{x}\right)^2>F\geq0.\]
So either $C>y_1/x+\sqrt{F}$ or $C<y_1/x-\sqrt{F}$. But, by
Remark \ref{r:constrainv}, we also have $y_1/x>s$ and so $C<y_1/x$.
Then $\dfrac{d}{dC}(C-R)\geq2$. Note also that $R\to\infty$ as
$C\to-\infty$. Consequently, we have that any point
$(s,t)\in\{(a,b):a<y_1/x,\ b>0\}$ is contained in at most one real
pseudo-wall (see Figure 1).
\end{remark}

\begin{figure}
\begin{tikzpicture}[scale=0.4]
\draw[->,thick] (-20,0) --(1,0) node[above] {$s$};
\draw[->,thick] (-10.5,0) -- (-10.5,14) node[left] {$t$};
\draw[-,thick] (-1,0) node [below] {$\frac{y_1}{x}$}--(-1,14) ;
\begin{scope}
\clip (-20,0) rectangle (1.2,14);
\foreach \x/\y in {1.3/0.83,1.7/1.37,2.1/1.85,3/2.83,4/3.87,5/4.90,6/5.9,7/6.9,8/7.93,9/8.94,10/9.95,11/10.96,12/11.96,20/20,100/100} {
\draw (-1cm-\x cm,0cm) circle (\y cm);
}
\end{scope}
\end{tikzpicture}
\caption{Typical real pseudo-walls}
\end{figure}
\begin{cor}
For $F>0$, any pseudo-wall must intersect $s=C_0$, where
\[C_0=\begin{cases} \dfrac{y_1}{x}-\sqrt{F}&\text{ if $x>0$.}\\
\dfrac{z+duy_2}{gy_1}&\text{ if $x=0$ and $y_1>0$.}
\end{cases}
\]
\end{cor}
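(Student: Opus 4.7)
The plan is to split into the two cases that already appear in the statement and reduce each one to facts established just before the corollary.

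First I would dispose of the rank zero case. If $x=0$ and $y_1>0$, Lemma \ref{l:zerorank} tells us that the centre of every pseudo-wall is exactly $(C_0,0)$ with $C_0=(z+duy_2)/(gy_1)$, independent of $w$. Every such pseudo-wall is a semicircle about this common centre in the upper half-plane $\{t>0\}$, so it trivially meets the vertical line $s=C_0$ (at its apex). This case requires nothing more.

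For the main case $x>0$, I would use Remark \ref{r:upperbound} to locate the centre $C$ of an arbitrary pseudo-wall relative to $C_0=y_1/x-\sqrt{F}$. That remark shows that the existence of a genuine (positive radius) pseudo-wall forces $(C-y_1/x)^2>F$, and combined with the constraint $C<y_1/x$ coming from Remark \ref{r:constrainv}, this yields $C<C_0$. The pseudo-wall in $\Plane{\gamma,\omega,u}$ is the semicircle
\[\{(s,t):(s-C)^2+t^2=R^2,\ t>0\},\]
so meeting the line $s=C_0$ amounts to verifying $(C_0-C)^2\le R^2$.

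To finish, set $a=y_1/x-C$, so $a>\sqrt{F}\ge0$ by the previous step. By Remark \ref{r:bogom} we have $R^2=a^2-F$, and $C_0-C=a-\sqrt{F}$. A one-line expansion gives
\[R^2-(C_0-C)^2=(a^2-F)-(a-\sqrt{F})^2=2\sqrt{F}\,(a-\sqrt{F})\ge 0,\]
with equality when $F=0$ (in which case $C_0=y_1/x$ is the common limit point of all the pseudo-walls, which still contains them in its closure) or when $a=\sqrt{F}$ (excluded by $R>0$). Hence the semicircle meets $s=C_0$, completing the proof.

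The only thing to watch is that the inequality $C<y_1/x$ from Remark \ref{r:constrainv} is really needed to choose the correct sign of the square root in the definition of $C_0$; without it one could only say $C$ lies outside the open interval $(y_1/x-\sqrt{F},y_1/x+\sqrt{F})$. With that sign settled, the rest is a short algebraic check, so I do not anticipate a substantive obstacle.
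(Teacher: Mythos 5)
Your argument is correct and follows the same route the paper intends: the corollary is stated without proof as an immediate consequence of Lemma \ref{l:zerorank} (the $x=0$ case), Remark \ref{r:bogom} (the formula $R^2=(C-y_1/x)^2-F$), and Remark \ref{r:upperbound} (which forces $C<y_1/x-\sqrt{F}$), and your one-line expansion $R^2-(C_0-C)^2=2\sqrt{F}\,(a-\sqrt{F})\geq 0$ is exactly the computation that fills in the omitted details. Your handling of the degenerate case $F=0$, where the semicircles only touch $s=C_0$ at $t=0$, is also appropriately flagged and consistent with the paper's later observation that $C+R=y_1/x$ when $F=0$.
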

\begin{remark}\label{r:cbound}
  This is particularly useful when trying to locate walls. For
  example, we must have
\[0<c_1-rs<y_1-xs\]
for the characteristic classes of a destabilizing object of an object
$e\in\A_s$. Since any such object must destabilize for some $t$ when
$s=C_0$ we obtain the following useful bounds on $c_1$:
  \[rC_0<c_1<y_1+(r-x)C_0.\]
See Example \ref{eg:rankzero} and Proposition \ref{p:ogrady} below.
\end{remark}

\begin{remark}
Using the result of Lo and Qin \cite{LoQin} which says that for each
ray there is an upper bound on the radius we can now deduce that
bound is uniform in $s$. In the next section we will give an explicit
such bound but it will also follow that the bound is not uniform in
$u$ (and we give an example in a later section to show that it can be
unbounded). 
\end{remark}

\section{Applications}
\subsection{Bertram's Nested Wall Theorem}
We can use these technical results to quickly prove:
\begin{thm}\label{t:nestedwalls}
Let $X$ be a smooth complex projective surface. Pick an ample
$\omega\in\NS(X)$ and a real class $\beta\in\NS_\R(X)$.
Then for any class $v\in\NN(X)$ representing the Chern character of a
$\mu_\omega$-semistable sheaf or a torsion sheaf on $X$ there is a half-plane $\Pi=\Plane{\beta,\omega,1}$
in $\stab(X)$ which contains the basic Bridgeland stability condition
$(\A_{\beta,\omega},Z_{\beta,\omega})$ such that the intersection of
the walls with $\Pi$ are nested.

In particular, if $\rho(X)=1$ the walls in the space of basic stability
conditions are nested.
\end{thm}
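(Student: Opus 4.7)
The plan is to reduce the statement directly to Proposition \ref{p:nestedwalls}. First, decompose $\beta = b\omega + \gamma$ with $b = (\beta\cdot\omega)/\omega^2$ and $\gamma \in \NS_\R(X)$ orthogonal to $\omega$. By construction, $\Plane{\beta,\omega,1}$ is the half-plane of stability conditions $(\A_{s\omega+\gamma,\omega}, Z_{s\omega+\gamma,t\omega})$ with $t>0$; setting $s=b$ and $t=1$ exhibits $(\A_{\beta,\omega}, Z_{\beta,\omega})$ as a point of $\Pi$, so $\Pi$ is the desired half-plane containing the chosen basic stability condition.

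Next, I would verify that $v$ is a Bogomolov class. By Remark \ref{r:constrainv}, both $\mu_\omega$-stable sheaves and torsion sheaves satisfy the Bogomolov inequality $c_1(v)^2 \geq 2r(v)\ch_2(v)$. With this in hand, Proposition \ref{p:nestedwalls}, applied to $v$ in the plane $\Plane{\gamma,\omega,1}=\Plane{\beta,\omega,1}$, yields that all real pseudo-walls in $\Pi$ are nested.

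To pass from real pseudo-walls to actual walls, observe that the condition $\mu_\sigma(a)=\mu_\sigma(b)$ in the definition of a wall $W^v_w$ is precisely the slope condition $\mu_Z(w)=\mu_Z(v)$ that makes $w$ an integral (and thus real) pseudo-wall. Hence every wall in $\Pi$ lies inside a real pseudo-wall, and since the real pseudo-walls are pairwise nested, so is the subfamily of walls. This gives the first assertion.

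For the Picard rank $1$ statement, $\NS_\R(X)=\R\omega$ forces $\gamma=0$ for every $\beta$, so $\spc{\beta,\omega}$ collapses to the single half-plane $\Plane{\beta,\omega,0}$. Using the identity $\A_{\beta,t\omega}=\A_{\beta,\omega}$ and the fact that the ample cone is a ray, every basic stability condition can be realized in this plane, so nestedness in $\Pi$ gives global nestedness. The argument is essentially an assembly of the earlier results; the one mildly delicate point is identifying the correct plane by aligning the $u=1$ parametrization with the orthogonal decomposition of $\beta$, and that is taken care of at the outset.
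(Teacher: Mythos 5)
Your argument is correct and is essentially the proof given in the paper: decompose $\beta=b\omega+\gamma$ with $\gamma\perp\omega$, note $v$ is a Bogomolov class so Proposition~\ref{p:nestedwalls} applies to give nestedness of the real pseudo-walls in $\Plane{\beta,\omega,1}$, and observe that actual walls form a subfamily of these. Your extra care in locating $(\A_{\beta,\omega},Z_{\beta,\omega})$ at $(s,t)=(b,1)$ and in spelling out the $\rho(X)=1$ collapse is a useful elaboration but not a different route.
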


\begin{proof}
We decompose $\beta=b\omega+\gamma$ as usual with
$\gamma\cdot\omega=0$. Then Proposition \ref{p:nestedwalls} implies
that the real pseudo-walls are nested. Since the actual walls are a
subset of these walls, these must be nested as well.
\end{proof}

One immediate corollary is that each chamber is path connected to the
$t=0$ axis:
\begin{cor}
For any $\beta$ and $\omega$, the moduli space of S-equivalence
classes of $\mu_Z$-semistable objects with Chern character $v$ for
$Z=Z_{s\omega+u\beta,t\omega}$ is isomorphic to a moduli space of
$\mu_Z$-semistable objects with the same Chern character $v$ for some $t$ arbitrarily small.
\end{cor}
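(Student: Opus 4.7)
The plan is to deduce this directly from Bertram's Nested Wall Theorem (\thmref{t:nestedwalls}) together with the general fact that moduli spaces of Bridgeland-semistable objects are locally constant on chambers. The only geometric content needed is to show that any chamber, intersected with the relevant half-plane, has points with $t$ arbitrarily close to $0$, so that the given stability condition can be path-connected within its chamber to one at arbitrarily small $t$.

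First I would work inside the plane $\Plane{\beta,\omega,u}$ containing the given stability condition $(\A_{s\omega+u\beta,\omega},Z_{s\omega+u\beta,t\omega})$. By \thmref{t:nestedwalls} the walls in this plane form a nested family of semicircles with centres on the $s$-axis, so the complement of the walls in the open upper half-plane splits as a disjoint union of regions of three types: the interior of the innermost wall, ``annular'' regions bounded by two consecutive nested walls, and the unbounded region outside all walls. In the annular case, if the inner semicircle has radius $R_1$ and centre $(C_1,0)$ and the outer has radius $R_2$ and centre $(C_2,0)$, then nesting forces $[C_1-R_1,C_1+R_1]\subset(C_2-R_2,C_2+R_2)$, and so the open intervals $(C_2-R_2,C_1-R_1)$ and $(C_1+R_1,C_2+R_2)$ lie in the closure of the chamber on the $s$-axis. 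The innermost and outermost cases are handled similarly (using local finiteness of walls for the innermost case). Hence every chamber contains points $(s,t)$ with $t>0$ arbitrarily small.

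I would then connect the original stability condition to one with $t$ very small by an explicit path inside the chamber, for instance a radial arc centred at one of the bounding walls combined with a short horizontal segment near $t=0$. Along such a path no wall is crossed, so the set of $\mu_Z$-semistable objects and their Jordan--H\"older factors are unchanged, yielding the required isomorphism of moduli spaces of S-equivalence classes. The only potential subtlety is upgrading this bijection of S-equivalence classes to an isomorphism of moduli spaces \emph{as schemes}, but this is the standard content of wall-crossing in Bridgeland stability and requires no new ingredients beyond the results already in this paper; the genuinely new input, namely the nested structure of the walls, was already supplied by \thmref{t:nestedwalls}.
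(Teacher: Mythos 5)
Your argument is correct and is essentially the paper's own reasoning: the paper states the corollary without proof, remarking only that it is immediate because the nested semicircular walls (all with centres on the $s$-axis) force every chamber to be path-connected to the $t=0$ axis, which is precisely the content you spell out. The extra details you supply (the annular/innermost/outermost case analysis and the explicit path) are a faithful elaboration of that same idea rather than a different route.
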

This shows that Bertram's ``sliding down the wall'' trick used in
\cite{MacMeachan} always works.
In the case of abelian surfaces and K3 surfaces this implies that such
moduli spaces are projective varieties and represent the appropriate
moduli functor (see \cite{MacMeachan} and \cite{MYY11b} for special
cases of this).

\subsection{A lower bound on the centres}
We have seen in Remark \ref{r:upperbound} that there is an upper bound
for the centres of the walls. We now show that the centres of
actual walls are also bounded below as well (at least for a large
class of $v$). This follows theoretically from the work of Lo and Qin
(\cite[Theorem 1.1(ii)]{LoQin}) who show that in each ray there are no
walls for t sufficiently large. Their construction is not constructive
and our aim here is to use the Nested Wall Theorem to compute explicit bounds.

We prove this in a series of lemmas. For these technical lemmas we
need some parameters from the surface. Let $\tilde
g=\gcd_{E}(c_1(E)\cdot\omega)$ over all sheaves $E$. Then $g/\tilde
g=g'$ is an integer and $\tilde y_1=y_1g'$ is always an integer (even though
$y_1$ may not be).  Initially we shall assume that $x=r(v)\neq0$ and deal with the
$x=0$ case afterwards. For any such $v$ we let
$p=\gcd(r(v),c_1(v)\cdot\omega/\tilde g)=\gcd(x,\tilde y_1)$ in the notation
of the previous section. What we do in the lemma is to show that if
we choose $s$ sufficiently close to $y_1/x$ we can ensure that there
are no subsheaves of $E$ in $A_s$. We have to work harder to show that
there can be no $\mu_Z$-destabilizing objects of rank bigger than
$r(E)$. When $p=1$ it turns out that the same bound works but we have
to go closer to $y_1/x$ if $p>1$. 

The bound is constructed as follows. Using Euclid's algorithm,
consider the set of pairs of integers $(m,n)$ such that
$n\tilde y_1-mx=p$. We assume $n>0$. Fix some $(m_0,n_0)$ and then observe that
any other pair is given by $(m,n)=(m_0+y'\lambda,n_0+x'\lambda)$ for some
integer $\lambda$, where $y'=\tilde y_1/p$ and $x'=x/p$. Note that the
ratio $m/(g'n)$ tends to $y_1/x$ from below as
$n\to \infty$ (whatever sign $y_1$ takes). So given any positive real
number $\xi$, we can pick some $\lambda$ such that 
 the set
\[\left\{(j,k)\in\mathbb{Z}^2:\dfrac{m}{n}=\dfrac{m_0+\lambda y'}{n_0+\lambda
    x'}<\frac{j}{k}<\frac{\tilde y_1}{x},\
\gcd(j,k)=1,\ 0<k\leq \xi x\right\}\]
is empty. Let us, for the moment, denote the least choice of such $m$
and $n$ for a given $\xi$ by $m_0$ and $n_0$ and define
the set 
\[P_\xi=\{(m,n):n\tilde y_1-mx=p\text{ and }|m|\geq |m_0|,\ n\geq
n_0,\ n>\dfrac{p}{g'}\sqrt{\frac{g}{\Delta}}\},\]
where $\Delta=gy_1^2-dy_2^2-\alpha^2$. This last mysterious condition
on $n$ often imposes no constraint but it will ensure that any wall
which is sufficiently near $y_1/x$ must also cross $m/(g'n)$ whenever
$(m,n)\in P_\xi$.

Now suppose $E$ is a torsion-free sheaf with $v(E)=v$ and $s<y_1/x$. Then
$\mu_\omega(E)=\dfrac{m\tilde g}{n}+\dfrac{\tilde
  g}{nx'}=\dfrac{\tilde y_1\tilde g}{x}=\dfrac{y_1g}{x}>sg$
and so it satisfies a necessary condition for $E\in\A_{s\omega+u\gamma,\omega}$.

Our strategy is as follows:
\begin{enumerate}
\item  We show that if a wall does cross $s=m/(g'n)$ and some other
  predetermined line closer to $s=y_1/x$ then the rank of any
  destabilizer must be at most $\xi x$ for a suitable $\xi$ depending
  only on $v$ and $X$. For $p=1$ we do not require this extra
  assumption and we can set $\xi=1$.
\item Assuming $F>0$, we show for any $(m,n)\in P_\xi$ for the $\xi$ given in (1) then
  any object which is $\mu_Z$-stable for any $Z$ in the region
  $(m/(g'n),y_1/x)\times\R_{>0}$ must be a $\mu_\omega$-semistable
  sheaf. 
\item We can then deduce that object must be $\mu_Z$-stable for all
  $Z$ in the region.
\item For $F=0$ we show directly that there are no walls at all.
\item We also need to assume $x>0$ but we will finish this section by
 showing that $y_1$ is always an upper bound for $R$ when $x=0$.
\end{enumerate}

First of all, set $\xi=1$.
\begin{lemma}\label{l:lowrank}
With this notation, suppose $s\geq m_0/(g'n_0)$. If
$E\in\A_{s\omega+u\gamma,\omega}=\A_s$ is a torsion-free sheaf which
is $Z_{s\omega+u\gamma,t\omega}$-stable for some $s<y_1/x$ and some $t>0$ then $E$ must be
$\mu_\omega$-semistable and, for any $t>0$, each
$Z_{s\omega+u\gamma,t\omega}$-destabilizing subobject of $E$ in $\A_s$ must
have rank strictly bigger than $r(E)$.
\end{lemma}

\begin{proof}
Since
$\dfrac{\tilde y_1\tilde g}{x}\geq\mu^-_\omega(E)>sg>\dfrac{\tilde g m_0}{n_0}$ and 
if $E$ is not $\mu_\omega$-semistable then $\mu^{-}_\omega(E)$ is
given by $\mu(F)$ for some sheaf $F$ with 
$r(F)<r(E)$, which contradicts the definition of $m_0$ and $n_0$. So $E$
must be $\mu_\omega$-semistable.

Suppose that $k\to E$  $Z_{s\omega+u\gamma,t\omega}$-destabilizes $E$ in
$\A_s$ with quotient $q$ and assume that $r(K)\leq x$. Note that $k$ must be a sheaf
$K$. We have a short exact sequence $K/Q^{-1}\to E\to Q^0$. Suppose
$Q^0_{\free}\neq0$ then since
$c_1(Q^0_{\free})\cdot\omega\leq c_1(Q^0)\cdot\omega$ and both
$K/Q^{-1}$ and $Q^0$ are in $T$, we must have one of them with slope
less than or equal to the slope of $E$. But then $K/Q^{-1}$
$Z_{s\omega+u\gamma,t\omega}$-destabilizes $E$ for all $s<y_1/x$ and
$t>0$ by Remark \ref{r:sameslope}. So
$Q^{0}$ must be a torsion sheaf. On the other hand,
$c_1(Q^0)\cdot\omega=0$ as otherwise,
$sg<\mu_\omega(K/Q^{-1})<\mu_\omega(E)$ which is impossible by
assumption. So $Q^0$ is supported on points.  But then $Q^{-1}$ must
be non-zero and so $r(K)>r(E)$.
\end{proof}
Since $m_0/(g'n_0)$ increases with $\xi$, this lemma also holds for
any $\xi>1$ as well.

For the rank bigger than $x$ case we appeal to a different
lemma. Observe first that, for any $\xi$ and $(m,n)\in P_\xi$ we have
\[\frac{m}{g'n}+\frac{2\tilde g p-1}{2gnx}=\frac{y_1}{x}-\frac{1}{2gnx}.\]
What we do now is to show that the critical wall for a destabilizing
object cannot lie outside the wall given by $s=m/(g'n)$ unless the rank is at most $r(v)$.

\begin{lemma}\label{l:highrank}
Pick any $\xi>0$ and $(m,n)\in P_\xi$. Let $s=m/(g'n)$.
Suppose $p=1$ and $e\in \A_s$ satisfies
$v(e)=v$. Then $e$ has no subobjects $k$ in $\A_s$ such that both
$\mu_Z(k)$ and $\mu_Z(e/k)$ are finite.
\end{lemma}

\begin{proof}
Suppose $k\subset e$ in $\A_s$ such that $\mu_Z(k)$ and $\mu_Z(e/k)$
are finite.
Let $\ch(k)=(r,c_1\omega+c_2\gamma+\alpha',\chi)$.
Since, $Z$ is a
stability condition and we have $0<\Im Z(k)<\Im Z(e)$ (from the
assumptions on $k$ and $e/k$).
Then
\begin{equation}\label{e:pisone}
0<c_1-r\frac{m}{g'n}<y_1-x\frac{m}{g'n}=\frac{n\tilde y_1-mx}{g'n}=\frac{p}{g'n}.
\end{equation}
Clearing denominators and setting $p=1$, we have $0<c_1g'n-rm<1$. But $c_1g'n-rm$ is an
integer. 
\end{proof}
In particular, this means that, for $p=1$, no wall of finite radius can intersect
$s=m/(g'n)$. This is because, if $\mu_Z(e/k)$ is infinite then $k$ cannot
$Z$-destabilize. If $\mu_Z(k)$ is infinite then $k$ must destabilize
for all $t>0$ which means that $s=m/(g'n)$ must be the wall. 

\begin{lemma}\label{l:highrankp}
Suppose $p>1$. Let $(m,n)\in P_\xi$ for $\xi=\dfrac{2\tilde g
  p}{2\tilde g p-1}$ 
and let $s\geq \dfrac{m_0}{g'n_0}+\dfrac{2\tilde g
  p-1}{2gn_0x}$.
Suppose $e\in \A_s$ satisfies
$v(e)=v$. Suppose $e$ has a subobject $k$ in $\A_s$ such that both
$\mu_Z(k)$ and $\mu_Z(e/k)$ are finite. Then $0\leq r(k)\leq r(e)$ and
$0\leq r(q)\leq r(e)$.
\end{lemma}
\begin{proof}
If the conclusion does not hold then either $r(q)$ or $r(k)$ is bigger
than $x=r(e)$. Without loss of generality assume $r(k)>r(e)$ and set
$c_1=c_1(k)\cdot\omega/g$ and $r=r(k)$. Since $c_1-rs<y_1-xs$, we have that
$c_1<y_1+(r-x)s$ and, since $s<y_1/x$, this gives
$c_1<y_1+(r-x)y_1/x=ry_1/x$. In other words,
$\mu_{\omega}(k)<\mu_{\omega}(e)$. 

But now
\[0<c_1-rs\leq c_1-r\left(\dfrac{m_0}{g'n_0}+\dfrac{2\tilde g
  p-1}{2gn_0x}\right).\] 
Rearranging gives 
\[r\dfrac{2\tilde g p-1}{2\tilde gx}<c_1g'n_0-m_0r.\]
Combining with (\ref{e:pisone}) gives
\[r<\frac{2\tilde g p}{2\tilde g p-1} x=\xi x.\]
But this is impossible for our choice of $(m_0,n_0)$. 
\end{proof}

We can now treat the $F=0$ case.
\begin{thm}\label{t:Fis0}
If $F=0$ then there are no walls in $\Pi_{\beta,\omega,y_2/x}$ except
when $\beta\cdot\omega=y_1g/x$. 
\end{thm}
\begin{proof}
When $F=0$, we have $C+R=y_1/x$ for any wall. Consequently, for any
$\xi>0$, any such wall must intersect some $s=\beta\cdot\omega/g=m/(g'n)$ for some
$(m,n)\in P_\xi$. When $p=1$ this contradicts Lemma \ref{l:highrank}
(by the remark following the lemma. It must also intersect
$s=\dfrac{y_1}{x}-\dfrac{1}{2gnx}$ and so when $p>1$, Lemma
\ref{l:highrankp} tells us that any object $e$ (with $v(e)=v$) in $\A_s$
must be a sheaf since $E^{-1}[1]$ is a subobject and
$r(E^{-1}[1])<0$ if $E^{-1}$ is non-zero. On the other hand, if a
sheaf $E$ is $\mu_Z$-stable for all sufficiently large $t$ (if it
corresponds to a wall, for example) then its torsion subsheaf must
eventually $Z$-destabilize it (for all large enough values of $t$)
because the $z$-slope of a torsion sheaf is 
$1/t$ times a (finite) constant in $t$. So $E$ must be torsion-free. Then, by
Lemma \ref{l:lowrank}, the rank of a $Z$-destabilizing subsheaf must
be bigger than $r(E)$. But that contradicts Lemma \ref{l:highrankp}
again. In any case, for $p\geq 1$, there cannot be such a wall.
\end{proof}
\begin{note}
  \begin{enumerate}
  \item The condition $F=0$ is a actually rather strong. It implies
    that $u=y_2/x$ (or $\gamma=0$) and the discriminant $\Delta(E)=\alpha^2$. Then the
    Hodge Index Theorem implies that $\alpha=0$ 
    and $\Delta(E)=0$. So these are Bogomolov critical sheaves.
\item Note that for line bundles $L$ on any smooth projective surface
  with $\alpha=0$  automatically satisfy $F=0$ on the plane given by
  $u=y_2$ and so we conclude they are Bridgeland stable there. This
  is already well known (see \cite[Prop 3.6(b)]{Arcara09}) in the case
  where $y_2=0$. 
\item When $\rho(X)=1$ and $K_X=0$ then $F=0$ on a K3 surface only for line
  bundles but for abelian surfaces this happens
  exactly for semi-homogeneous bundles and we conclude that these are
  always $\mu_Z$-stable for all $Z$. This gives a proof independently
  of Fourier-Mukai transform methods.
\item It should be possible to repeat the analysis and these theorem
  for the refined basic stability condition given for K3 surfaces by
  Bridgeland (\cite{BrK3} and used in \cite{Arcara07} and
  \cite{MYY11a}) in which $v$ is multiplied by $\sqrt{\td(X)}$. In
  that case the critical objects should correspond to spherical objects.
\end{enumerate}
\end{note}

We can now assume $F>0$. Note that if $(m,n)\in P_\xi$ for any $\xi>0$
then we require 
\begin{equation}\label{e:ncond}
n>\dfrac{p}{g'}\sqrt{\dfrac{g}{\Delta}}.
\end{equation}
 We now use this condition
by observing that $F\geq\dfrac{\Delta}{x^2g}>\dfrac{p^2}{{g'}^2n^2x^2}$
for all $u$ and so
\[C<\dfrac{y_1}{x}-\sqrt{F}<\dfrac{y_1}{x}-\dfrac{p}{g'nx}=\dfrac{m}{g'n}.\]
So any wall which crosses any line $s=s_0>m/(g'n)$ for any $(m,n)\in P_\xi$
and $\xi\geq 1$ must also cross $s=m/(g'n)$ as well. In particular, this
holds for the least such $n$.

Now let $\xi_p=1$ if $p=1$ and $\xi_p=\dfrac{2\tilde g p}{2\tilde g p-1}$ if $p>1$.

\begin{lemma}\label{l:zstabismustab}
Let $s_0=m/(g'n)$ for some $(m,n)\in P_\xi$ for $\xi=\xi_p$.
Suppose $e$ is an object of $\A_s$ for some $s_0<s<y_1/x$, where $v(e)=v$ and 
$r(v)>0$. Suppose for (at least) one $s$ there is a real number $t>0$ such
that $e$ is $\mu_Z$-stable with $Z=Z_{s\omega+u\gamma,t\omega}$. Then $e$ must be a
$\mu_\omega$-semistable sheaf.
\end{lemma}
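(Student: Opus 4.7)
My plan is a two-stage reduction: first show $H^{-1}(e) = 0$, so $e$ is a genuine sheaf; then invoke Lemma~\ref{l:lowrank} to conclude $\mu_\omega$-semistability (applicable since $s > s_0 \geq m_0/(g'n_0)$ and $e \in \A_s$ is then a sheaf).

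For the first stage I would argue by contradiction. If $H^{-1}(e) \neq 0$, the standard truncation triangle yields a short exact sequence
\[ 0 \to H^{-1}(e)[1] \to e \to H^0(e) \to 0 \]
in $\A_s$ for every $s \in (s_0, y_1/x)$, with $H^{-1}(e)$ a nonzero torsion-free sheaf (hence of strictly positive rank), so $r(H^0(e)) = r(v) + r(H^{-1}(e)) > r(v)$. I would then apply Lemma~\ref{l:highrank} (if $p = 1$) or Lemma~\ref{l:highrankp} (if $p > 1$) to this SES at $s = s_0 = m/(g'n)$ — and, in the $p > 1$ case, also at $s_0 + (2\tilde g p - 1)/(2gnx)$ — to force the contradictory bound $r(H^0(e)) \leq r(v)$. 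Verifying the $\mu_Z$-finiteness hypothesis is straightforward: $\mu^+_\omega(H^{-1}(e)) \leq s_0 g < sg$ ensures $\Im Z(H^{-1}(e)[1]) > 0$, and a $0$-dimensional torsion subsheaf of $H^0(e)$ would embed in $e$ as a destabilizing subobject of infinite $\mu_Z$-slope, contradicting the $\mu_Z$-stability of $e$, so $\Im Z(H^0(e)) > 0$ as well.

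The main technical obstacle will be to verify that the SES actually lies in $\A$ at the required boundary value $s = s_0$. Taking $s \to s_0^+$ in the defining inequalities gives $\mu^+_\omega(H^{-1}(e)) \leq s_0 g$, placing $H^{-1}(e)[1]$ in $\A_{s_0}$, and $\mu^-_\omega(H^0(e)) \geq s_0 g$; but the strict inequality $\mu^-_\omega(H^0(e)) > s_0 g$ needed for $H^0(e) \in T_{s_0}$ can fail. The resolution should be a small perturbation: the key quantity $c_1(H^0(e)) g' n - r(H^0(e)) m$ is a fixed integer independent of $s$, so the integrality contradiction driving the proofs of Lemmas~\ref{l:highrank} and~\ref{l:highrankp} still applies after replacing $s_0$ by $s_0 + \epsilon$ for arbitrarily small $\epsilon > 0$, at which the SES genuinely lies in $\A_{s_0 + \epsilon}$ and all the strict inequalities needed for the integrality estimate remain valid. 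This yields the contradiction, forces $H^{-1}(e) = 0$, and completes the first stage.
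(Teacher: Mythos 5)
Your proposal follows essentially the same route as the paper: reduce to showing $H^{-1}(e)=0$ via the truncation triangle, apply Lemmas~\ref{l:highrank}/\ref{l:highrankp} to bound $r(H^0(e))$ by $r(v)$ and use torsion-freeness of $H^{-1}(e)$ to force it to vanish, then invoke Lemma~\ref{l:lowrank} for $\mu_\omega$-semistability. Your additional care about the $\mu_Z$-finiteness hypothesis and the boundary value $s=s_0$ (via a small perturbation) fills in details the paper's terse proof leaves implicit, but does not change the argument.
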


\begin{proof}
The argument is the same as in the proof of Theorem \ref{t:Fis0}.
\end{proof}

So what we have shown is that there is some $s_0<y_1/x$ (which can be
computed explicitly) with the following property. If $e$ is an object
of some $\A_s$ for $s_0<s<y_1/x$ and is $\mu_Z$-stable for some $t$
then it must be a $\mu_\omega$-semistable sheaf by
\ref{l:zstabismustab}. But then by lemmas \ref{l:lowrank},
\ref{l:highrank} and \ref{l:highrankp} it must be $\mu_Z$-stable for
all $s_0<s<y_1/x$ and $t>0$.
What this means is that no wall can intersect either the region
$s\geq \dfrac{m}{g'n}$ (when $p=1$) or $s\geq\dfrac{2gny_1-1}{2gnx}$ (when
$p>1$). For the moment combine these to give the bound
\[\dfrac{y_1}{x}-\dfrac{(2p\tilde g-1)\epsilon+1}{2gnx},\]
where $\epsilon=1$ when $p=1$ and $=0$ otherwise. Then
$C+R\leq \dfrac{y_1}{x}-\dfrac{(2p\tilde g-1)\epsilon+1}{2gnx}$.
Solving for $C$ we obtain:
\begin{prop}\label{p:rankplus}
Suppose $v$ is the Chern character of a $\mu_\omega$-semistable
sheaf. Then the centres $C$ of the walls in $\Plane{\beta,\omega,u}$ are bounded
below by 
\[\frac{1}{2}\left(\frac{mx+n\tilde y_1}{g'nx}-Fg'nx\right)\text{ when
  $p=1$ or}\]
\[\frac{4gny_1-1}{4gnx}-Fgnx\text{ when $p>1$.}\]
\end{prop}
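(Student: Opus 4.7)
The plan is to convert the upper bound on $C+R$ established just above the proposition into a lower bound on $C$ alone, using the explicit radius formula from Remark \ref{r:bogom}.

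Write $\delta = \dfrac{(2p\tilde g-1)\epsilon+1}{2gnx}$, so that the bound displayed just before the proposition reads $C+R\leq \dfrac{y_1}{x}-\delta$. Set $u=\dfrac{y_1}{x}-C$; by Remark \ref{r:upperbound} we have $u>0$, and Remark \ref{r:bogom} gives $R=\sqrt{u^2-F}$. Since $R\geq 0$, the bound on $C+R$ forces $u\geq \delta$, and can be rewritten as $\sqrt{u^2-F}\leq u-\delta$. Both sides are nonnegative, so squaring is legitimate and yields $u^2-F\leq u^2-2u\delta+\delta^2$, that is,
\[u\leq \frac{F}{2\delta}+\frac{\delta}{2},\]
which rearranges to
\[C\geq \frac{y_1}{x}-\frac{\delta}{2}-\frac{F}{2\delta}.\]

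It remains to insert the two cases for $\delta$ and simplify. When $p=1$ we have $\epsilon=1$ and $\delta=\dfrac{1}{g'nx}$ (using $\tilde g\, g'=g$); the defining identity $g'ny_1-mx=p=1$ then gives
\[\frac{y_1}{x}-\frac{\delta}{2}=\frac{2g'ny_1-1}{2g'nx}=\frac{g'ny_1+mx}{2g'nx},\]
while $\dfrac{F}{2\delta}=\dfrac{Fg'nx}{2}$, producing the first stated formula. When $p>1$ we have $\epsilon=0$ and $\delta=\dfrac{1}{2gnx}$; then $\dfrac{y_1}{x}-\dfrac{\delta}{2}=\dfrac{4gny_1-1}{4gnx}$ and $\dfrac{F}{2\delta}=Fgnx$, producing the second stated formula.

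There is no real conceptual obstacle here: all of the geometric content is already in the preceding lemmas \ref{l:lowrank}, \ref{l:highrank}, \ref{l:highrankp} and \ref{l:zstabismustab}, which give the bound on $C+R$. The only thing to watch is the algebraic bookkeeping between the parameters $g,\tilde g,g'$ and the integer identity $g'ny_1-mx=p$, which is precisely what rewrites $\dfrac{y_1}{x}-\dfrac{1}{2g'nx}$ in the symmetric form $\dfrac{mx+g'ny_1}{2g'nx}$ in the $p=1$ case.
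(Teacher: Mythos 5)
Your proposal is correct and follows exactly the paper's route: the paper's entire proof is the phrase ``Solving for $C$ we obtain,'' applied to the bound $C+R\leq \frac{y_1}{x}-\frac{(2p\tilde g-1)\epsilon+1}{2gnx}$ established in the preceding discussion, and your algebra (writing $R=\sqrt{(C-y_1/x)^2-F}$, squaring $\sqrt{u^2-F}\leq u-\delta$, and invoking $g'ny_1-mx=p$ together with $\tilde g g'=g$) supplies precisely the computation the paper omits, landing on both stated formulas. The only cosmetic quibble is that your auxiliary variable $u$ collides with the plane parameter $u$ on which $F$ already depends; rename it to avoid confusion.
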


When $x=0$ we proceed differently. Recall from Lemma \ref{l:zerorank}
that all walls have a fixed centre:
\[C=\dfrac{z+duy_2}{gy_1}.\]
Observe that if $w=(r,(c_1\omega+c_2\gamma+\alpha'),\chi)$ corresponds
to a wall then $r>0$ because then $\mu_Z(w)-\mu_Z(v)$ would be
constant. But using Remark \ref{r:cbound}, we have
\[C<\frac{c_1}{r}<\frac{y_1}{r}+C\leq y_1+C.\]
In other words, the slope of $w$ is bounded by constants.
But, an object $e\in\A_s$ with $\ch(e)=w$ must satisfy
$\mu_\omega(e)>s$ and so if the wall has radius $R=R_v(w)$, we have
$C+R<\dfrac{c_1}{r}<y_1+C$ and so $R<y_1$. So we have proved:
\begin{prop}\label{p:zerorank}
If $x=0$ then the radius of any wall for
$v=(0,y_1\omega+y_2\gamma+\alpha,z)$ is less than $y_1$.
\end{prop}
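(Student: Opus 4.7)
The strategy is to combine three ingredients already established: Lemma \ref{l:zerorank}, which tells us that when $x=0$ the centre $C=(z+duy_2)/(gy_1)$ is independent of the wall class $w$; the bound on $c_1$ from Remark \ref{r:cbound}; and the membership condition for the tilted heart recalled in Remark \ref{r:constrainv}. With these in hand the proposition should drop out of a short chain of inequalities.

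First I would argue that for any class $w=(r,c_1\omega+c_2\gamma+\alpha',\chi)$ supporting a genuine wall one must have $r>0$. Indeed, if $r=0$ then (since $x=0$ also) both slopes specialize to
\[
\mu_Z(v) = \frac{z+duy_2}{y_1 g t}, \qquad \mu_Z(w) = \frac{\chi+duc_2}{c_1 g t},
\]
so the equation $\mu_Z(v)=\mu_Z(w)$ holds either for every $(s,t)$ or for no $(s,t)$ --- no proper semicircular wall arises. Hence any destabilizer for a genuine wall has strictly positive rank.

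Next, specializing Remark \ref{r:cbound} to $x=0$ with $C_0=C$ gives $rC<c_1<y_1+rC$, and dividing through by $r\geq1$ yields
\[
C < \frac{c_1}{r} < \frac{y_1}{r}+C \leq y_1+C.
\]
Finally, for any $(s,t)$ on the wall with $t>0$, a destabilizing object $e$ with $\ch(e)=w$ must lie in $\A_{s\omega+u\gamma,\omega}$ and so by Remark \ref{r:constrainv} satisfies $\mu_\omega(e)=c_1g/r>sg$, i.e.\ $c_1/r>s$. Since the wall is a semicircle of radius $R$ centred on the $s$-axis, the supremum of such $s$ is $C+R$, and continuity forces $c_1/r\geq C+R$. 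Therefore
\[
R \leq \frac{c_1}{r}-C < y_1,
\]
which is the claimed bound.

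The only delicate point in the plan is ruling out $r=0$ and being careful about strict versus non-strict inequalities when passing to the limit $t\to 0^+$; neither presents a genuine obstacle, since the $r=0$ case follows directly from the explicit form of $Z$ and the limiting inequality is immediate by continuity of the condition $\mu_\omega(e)>s$ along the wall. Apart from these checks the proof is a one-line consequence of the previously established constraints.
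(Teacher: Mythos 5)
Your proof is correct and follows essentially the same route as the paper: rule out $r=0$ via the constancy of $\mu_Z(w)-\mu_Z(v)$, apply the $c_1$ bound of Remark~\ref{r:cbound} with the fixed centre from Lemma~\ref{l:zerorank}, and use the heart condition $\mu_\omega(e)>sg$ to get $C+R\leq c_1/r<y_1+C$. The only blemish is that your displayed formulas for the two slopes in the $r=0$ step drop the $-sy_1g$ and $-sc_1g$ terms from the numerators; this is harmless since those terms cancel in the difference, so the conclusion that the wall condition holds everywhere or nowhere stands.
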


Combining the above results with Proposition \ref{p:nestedwalls} we
have the main theorem of the paper: 
\begin{thm}\label{t:bound}
Let $\beta$ be any class in $N(X)_{\mathbb{R}}$,
$\omega$ an ample class and $u\in\mathbb{R}$.
Fix a Chern character $v$ of a $\mu_\omega$-semistable sheaf or a torsion
sheaf. In the half-plane $\Plane{\beta,\omega,u}$ there are real
numbers $C_0$ and $R_0$ such that all of 
the walls corresponding to $v$ are contained in the semi-circle with centre $(C_0,0)$ and
radius $R_0$. In particular, the radii of the walls are bounded above
by $R_0$.
\end{thm}

\begin{note}\label{n:summary}
The bounds are not always sharp. This is especially true for
  $p>1$ where the $p=1$ bound may often suffice. But even when $p=1$,
  we shall see an example below where the bound is not quite sharpest.
The bounds are constructive (we shall give
    explicit examples in the next section). For the first three points of
    this note we assume $p=\gcd(x,\tilde y_1)=1$.
  \begin{enumerate}
  \item For $(m,n)\in P_1$ with $n$ least
\[\frac{1}{2}\left(\frac{mx+n\tilde y_1}{g'nx}-Fg'nx\right)\leq
C<\frac{y_1}{x}-\sqrt{F},\]
and
\[0<R\leq
\frac{1}{2}\left(Fg'nx-\frac{1}{g'nx}\right),\]
where $F$ is given by Equation \ref{e:F}
and $g'=g/\gcd_E(c_1(E)\cdot\omega)$.
\item The upper and lower bounds on $s$ are
\[\frac{y_1}{x}-Fg'nx\leq s<\frac{m}{g'n}.\]
\item Recall the construction of $m$ and $n$: they are given by
  Euclid's algorithm applied to $n\tilde y_1-mx=1$ and such
  that
there are no integers $j$ and $0<k\leq x$ such that
\[\frac{m}{n}<\frac{j}{k}<\frac{\tilde y_1}{x}.\]
We also had the technical assumption on $n$ given by \ref{e:ncond} but
this would rarely need to be imposed as the RHS is often less than $1$.
It also makes sense to choose $m$ to be the least such.
For example (ignoring \ref{e:ncond}), suppose $g'=1$ then if $y_1=2$
and $x=1$, we can pick $m=1=n$. Whereas, 
if $y_1=10$ and $x=5$ then we pick $m=7$ and $n=4$.
\item The dependence on $u$ is quadratic entirely through $F$. In
  particular, it is potentially unbounded as $u$ increases. We shall
  see an example below where the walls are actually unbounded in
  the $(u,s)$-plane.
\end{enumerate}
\end{note}

Combining the theorem with local finiteness of the walls we can state
a version of Theorem 1.1(i) in \cite{LoQin}:
\begin{thm}\label{t:finite}
Suppose $F>0$ for some $u$. Then, for an $s\neq
\dfrac{y_1}{x}-\sqrt{F}$, there are only a finite number 
of mini-walls in the ray $\ray{s\omega+\gamma,\omega}$.
\end{thm}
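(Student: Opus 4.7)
The plan is to combine the global radius bound of \thmref{t:bound} with the standard local finiteness of the wall structure in $\stab(X)$. The role of the assumption $s \neq \frac{y_1}{x} - \sqrt{F}$ is to prevent the mini-walls on the ray from accumulating at the boundary $t = 0$, where local finiteness does not apply.

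First I would parametrize the walls by their radii. Since $C < y_1/x$ by \lemref{r:upperbound}, the identity of \lemref{r:bogom} inverts to $C = \frac{y_1}{x} - \sqrt{R^2 + F}$, so each wall is determined within the plane by its radius $R$. Writing $\delta = \frac{y_1}{x} - s$ (one may assume $\delta > 0$ since otherwise no wall meets the line $s=s_0$ at all), the condition that a wall with parameters $(C,R)$ crosses the vertical line at $s$ is $|s - C| \leq R$. Substituting the formula for $C$ and squaring yields
\[
R \geq \frac{|\delta^2 - F|}{2\delta} =: R_{\min}.
\]
The hypothesis $s \neq \frac{y_1}{x} - \sqrt{F}$ is precisely the statement $\delta^2 \neq F$, and hence $R_{\min} > 0$.

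Next, by \thmref{t:bound} every wall satisfies $R \leq R_0$, so any wall contributing a mini-wall to the ray has $R \in [R_{\min}, R_0]$, a compact interval bounded away from $0$. Suppose for contradiction that there are infinitely many mini-walls. Since distinct walls correspond to distinct values of $R$ via the parametrization above, the resulting infinite subset of $[R_{\min}, R_0]$ has an accumulation point $R^* \geq R_{\min} > 0$; the centres then accumulate at $C^* = \frac{y_1}{x} - \sqrt{R^{*2} + F}$. In particular, the ``top'' points $(C_i, R_i)$ of these walls, each lying on its wall $W_i$, accumulate at $(C^*, R^*)$, a point of $\stab(X)$ proper because $R^* > 0$.

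Finally I would invoke local finiteness of the wall and chamber structure for $v$ in $\stab(X)$: arbitrarily small neighbourhoods of $(C^*, R^*)$ would then meet infinitely many of the distinct walls $W_i$, contradiction. The main obstacle is precisely the isolation of why $s \neq \frac{y_1}{x} - \sqrt{F}$ is required; without it $R_{\min} = 0$, so the mini-wall heights can extend all the way down to $t = 0$, where local finiteness of $\stab(X)$ gives no information and accumulation genuinely can occur. Everything else is a clean compactness-and-parametrization argument pulling together the earlier bounds of the paper.
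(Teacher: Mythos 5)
Your proof is correct and follows exactly the route the paper indicates (it gives no written proof beyond "combining Theorem \ref{t:bound} with local finiteness of the walls"): an upper bound on the radii from \thmref{t:bound}, a lower bound $R_{\min}>0$ coming precisely from $s\neq y_1/x-\sqrt{F}$ via the relation $C=y_1/x-\sqrt{R^2+F}$, and local finiteness applied at an interior accumulation point. Your device of passing to the apexes $(C_i,R_i)$ of the semicircles, rather than the crossing points on the ray, is exactly the right way to guarantee that the accumulation point stays away from the boundary $t=0$.
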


\section{Examples and Counterexamples}

\subsection{Rank $0$ case}
\begin{prop}\label{eg:rankzero}
Suppose $(X,\omega)$ is a surface with $\rho(X)=1$ and $g'=1$. Then there are
no walls in the basic Bridgeland stability plane for $v=(0,1,kg)$ for
any integer $k$.
\end{prop}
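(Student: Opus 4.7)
The plan is to combine \lemref{l:zerorank} on the common centre of rank-zero walls with the integrality of $c_1$ forced by the two hypotheses $\rho(X)=1$ and $g'=1$.

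First I would unpack the hypotheses. Picard rank one forces $\gamma=0$, so there is only the single half-plane $\Plane{\omega,\omega,0}$ to consider, and every class in $\NS(X)$ is an integer multiple of the primitive ample generator. The condition $g'=g/\tilde g = 1$ means $\tilde g = g$, so the primitive generator must be $\omega$ itself, and hence the first Chern class of any $w\in\NN(X)$ has the form $c_1\omega$ with $c_1\in\Z$. Reading off $v=(0,1,kg)$ gives $x=0$, $y_1=1$, $y_2=0$, $\alpha=0$, $z=kg$.

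Now I would apply \lemref{l:zerorank}: since $x=0$ and $y_1>0$, every wall for $v$ has the same centre
\[C_0=\frac{z+duy_2}{gy_1}=\frac{kg}{g}=k.\]
Suppose, for a contradiction, that some $w=(r,c_1\omega,\chi)\in\NN(X)\setminus\langle v\rangle$ produces a wall. Evaluating $\Im Z(w)>0$ and $\Im Z(v-w)>0$ on the line $s=C_0$ gives the strict bounds recorded in Remark~\ref{r:cbound},
\[rC_0<c_1<y_1+(r-x)C_0,\]
which in our setting collapse to
\[rk<c_1<rk+1.\]
Since $c_1\in\Z$, there is no value of $c_1$ in this open unit interval, so no such $w$ can be critical and no walls exist.

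There is essentially no obstacle beyond bookkeeping: once one has identified that $\rho(X)=1$ together with $g'=1$ forces $c_1(w)\in\Z\omega$, the concentric nature of the walls supplied by \lemref{l:zerorank} and the $\Im Z>0$ bounds squeeze $c_1$ into an empty open unit interval. The only item to verify is that Remark~\ref{r:cbound} is applicable when $x=0$, but its derivation uses only positivity of the imaginary parts of $Z(w)$ and $Z(v-w)$ at a point of the wall lying on $s=C_0$, which holds for any non-degenerate wall.
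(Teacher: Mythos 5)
Your proof is correct and follows essentially the same route as the paper: both identify the common centre $C_0=k$ via Lemma~\ref{l:zerorank} and then derive the contradiction $rk<c_1<rk+1$ from the bounds of Remark~\ref{r:cbound} together with the integrality of $c_1$ forced by $\rho(X)=1$ and $g'=1$. The only difference is that the paper first argues the (maximal) destabilizer is a torsion-free sheaf of positive rank before invoking the remark, a step you can afford to omit since the squeeze is already vacuous for every integer $r$ and rank-zero critical classes are proportional to $v$, hence excluded by the definition of a wall.
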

\begin{proof}
Let $E$ be a torsion sheaf with
$\ch(E)=v$ (a twist of the structure sheaf of a divisor in the linear
system of $\omega$) and suppose $K\to E$ is a (maximally)
destabilizing object in $\A_s$ 
for some $s$ with $\ch(K)=(r,c,\chi)$. Observe that $K$ cannot have
torsion because if it did 
$K_{\text{tors}}$ would have to have
$c=1$ and so $E/K_\text{tors}$ would be a skyscraper and so
$\mu_Z(K)<\mu_K(K_{\free})$ contradicting the maximality. So (at
least) $r>0$. It also follows by a similar argument that $K$ is
$\mu_\omega$-semistable. We use 
Remark \ref{r:cbound}. We have $C_0=z/g=k$ and so we have
\[kr<c<1+kr,\]
which is a contradiction.
\end{proof}
When $z\neq kg$ then there may be walls. For example, consider
$E=\OO(\omega)/\OO_X$ for some map $\OO_X\to\OO(\omega)$. This is just
a line bundle supported on a divisor in the linear system $|\omega|$.
This has Chern character $(0,\omega,g/2)$ and there is
a wall with centre $1/2$ and radius $1/2$ given by $K=\OO(\omega)$.

\subsection{Rank $1$ case}
We now consider the cases studied in \cite{MacMeachan}. There
$X=(\T,\ell)$ was an irreducible principally polarized abelian surface
(ppas for short) and so $\rho(X)=1$ and $g=2=\tilde g$. Note that we must have $\gamma=0$
(and so $d=0$ in he formulae of Note \ref{n:summary}). We set
$v=(1,2\ell,4-k)$ and $\beta=\omega$. Then $F=k$ and the bounds
become
\[\frac{3-k}{2}\leq C<2-\sqrt{k},\]
\[R\leq \frac{1}{2}(k-1),\]
and the walls are constrained to satisfy $2-k\leq s<1$. It follows
that $L^2\I_p$ for a point $p$ is $\sigma$-stable for all
$\sigma\in\Plane{\omega,\omega,0}$ for which the sheaf is in
$\A$. Note that, in that case, the maximal radius is achieved (by
$w=(1,\ell,1)$). This agrees with the finding of \cite{MacMeachan}.

In that paper (and elsewhere) it is shown that the twisted ideal sheaf
$L\otimes \I_V$ is $Z$-stable for
any zero-scheme $V$ but the situation is very different if we assume
$\T$ is reducible. Suppose $\T=E_1\times E_2$ is a product of elliptic
curves (for simplicity, we assume they are non-isomorphic but this is
not really necessary). Then the canonical polarization splits as
$\ell=\omega=\ell_1+\ell_2$. In this case, we have $\tilde g=1$ and so
$g'=2$. This time we must pick $m=3$ and $n=1$. We still have $F=k$
but now $s<3/2$ for our walls and our maximal radius (of a
pseudo-wall) is now $k-1/4$ with centre $7/4-k$. But this does not
correspond to a real wall. The maximal wall is given by
$w=(1,2\ell_1+\ell_2,2)$ with centre $2-k$ and radius $\sqrt{k(k-1)}$. Then
  $s<2-k+\sqrt{k(k-1)}<3/2$, for all $k$. In fact, the limit of $C+R$
  for these walls as $k\to\infty$ is $3/2$ but the limit is never achieved. 

To give a more complete picture, consider the same product torus but
this time let $v=(1,\ell_1+\ell_2,1-k)$. This corresponds to the
sheaves $L\otimes\I_V\otimes \P_\xhat$, where $\P_\xhat\in\Pic^0\T$
and $|V|=k$. These sheaves have two families of potentially
destabilizing subsheaves: $L_1\otimes\I_{V'}$ and $L_2\otimes\I_{V'}$
up to twists by flat line bundles, 
where $|V'|<|V|$. These correspond geometrically to $V'\subset V$ and
$V\setminus V'$ being contained in a translate of either $E_2$ or
$E_1$, respectively. These two families of walls intersect along a plane
of constant $u$ (as predicted by Theorem \ref{t:nestedwalls}). The
situation is illustrated in Figure 2 for the case $k=4$, where the vertical axis is $t$
and the two grey planes are $s=1$ and $u=0$. On the plane $u=0$ the
walls have centres at $s=m-3$ and radii $\sqrt{m^2-8m+12}$. These
radii are positive for $m=0$ and $m=1$ only. Note that in the $(s,u)$
plane the walls are parabolas and so are unbounded.

\begin{figure}
\includegraphics[height=4in]{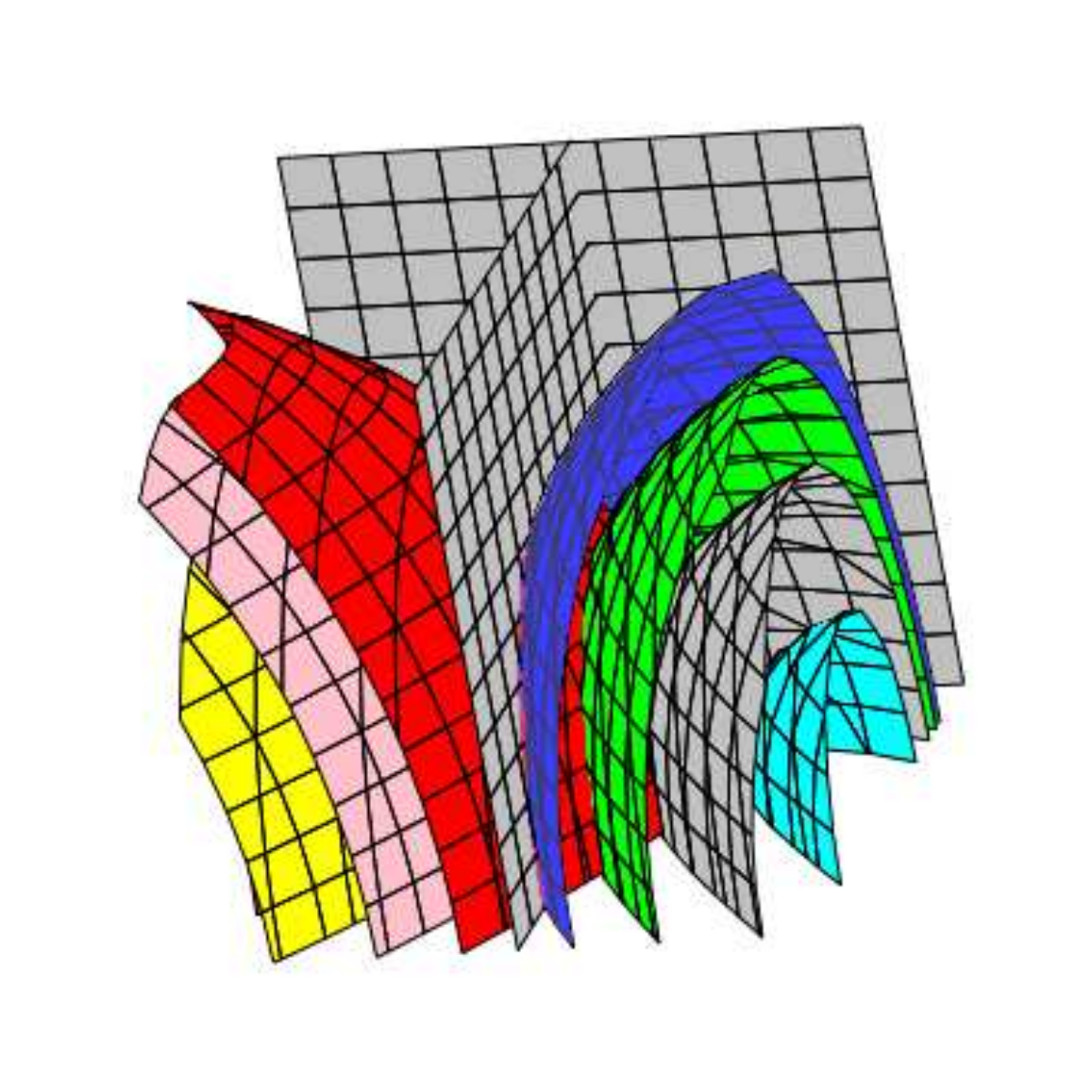}
\caption{Walls in $\Plane{\ell_1,\ell,u}$}
\end{figure}

\subsection{Global finiteness.}
We can ask if the set of walls is always finite but two examples in
\cite{MeachanThesis} show that this is not the case. For an
irreducible principally polarized abelian surface $(\T,\omega)$ the Chern characters
$(1,2\omega,2)$ and $(1,2\omega,1)$ result in infinite families of
walls. But it does confirm our finding that, away from $C_0$, each ray
contains only finitely many walls. However, we can prove:
\begin{prop} In the case of a principally polarized abelian surface $(\T,\omega)$,
if $q$ is rational then the number of mini-walls in the ray $\beta=q\omega$ is finite.
\end{prop}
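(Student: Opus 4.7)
The plan is to reduce the claim to Theorem~\ref{t:finite}. Since $(\T,\omega)$ is a ppas with $\rho(\T)=1$ we have $\gamma=0$, so the ray $\beta=q\omega$ is exactly the ray $\ray{q\omega,\omega}$ sitting inside $\Plane{\omega,\omega,0}$, and the quantity $F$ from Equation~\ref{e:F} collapses to the fixed rational number $F=\Delta(v)/(r(v)^2\omega^2)$ depending only on $v$. Theorem~\ref{t:finite} then delivers finitely many mini-walls on $\ray{q\omega,\omega}$ as soon as $q\neq y_1/x-\sqrt{F}$.

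The first step is to note that if $\sqrt{F}$ is irrational then $y_1/x-\sqrt{F}$ is irrational, so any rational $q$ is automatically distinct from it and Theorem~\ref{t:finite} gives the result immediately. This already accounts for the two concrete infinite families of walls mentioned just above (where $F=2$ and $F=3$), since in those examples $\sqrt{F}$ is irrational and the corresponding rational $q$ sit off the accumulation ray $s=y_1/x-\sqrt{F}$.

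The remaining case is $\sqrt{F}\in\Q$, which on a ppas (where $\omega^2=2$) is equivalent to $\Delta(v)=2k^2$ for some non-negative integer $k$. I only need to handle the subcase $q=y_1/x-\sqrt{F}$, and here I would argue by direct enumeration: by Theorem~\ref{t:bound} the centre $C$ and radius $R$ of any wall for $v$ lie in a bounded region, so $y_1/x-C$ is confined to a compact interval. Feeding this into the wall equation $\mu_Z(w)=\mu_Z(v)$ together with the simultaneous Bogomolov inequalities $\Delta(w)\geq 0$ and $\Delta(v-w)\geq 0$ cuts the destabilising Chern character $w\in\NN(\T)\cong\Z\oplus\Z\oplus\tfrac12\Z$ down to a finite set; the nestedness supplied by Proposition~\ref{p:nestedwalls} then lets us list the resulting walls in order and verify each contributes at most one mini-wall.

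The main obstacle is precisely this rational-$\sqrt{F}$ subcase: once $y_1/x-\sqrt{F}$ is allowed to be rational the irrationality short-cut of the first step fails, and walls in principle could accumulate at the boundary point $(C_0,0)$. The real content of the proof is therefore the finite-enumeration argument in the last paragraph, which combines the centre/radius bounds of Theorem~\ref{t:bound} with the Bogomolov inequalities on $w$ and $v-w$ and the lattice structure of $\NN(\T)$.
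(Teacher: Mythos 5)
Your reduction of the case $q\neq y_1/x-\sqrt{F}$ to Theorem~\ref{t:finite} is fine (modulo the untreated degenerate cases $F=0$ and $x=0$, which are easy), but it is not the paper's route: the paper proves the whole statement in one stroke by invoking, for rational $q$, a Fourier--Mukai transform $\Phi_q$ that \emph{reverses} the ray $\beta=q\omega$ (citing \cite[Prop 3.2]{MacMeachan}), and then applies Theorem~\ref{t:bound}. The point of that argument is that mini-walls on a ray can only accumulate as $t\to 0$; the transform $\Phi_q$ carries the $t\to 0$ end of the ray to the $t\to\infty$ end while preserving the wall structure, and Theorem~\ref{t:bound} forbids walls for large $t$, so only a compact sub-interval of the ray can meet walls and local finiteness concludes.

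The genuine gap is in your treatment of the remaining subcase $q=y_1/x-\sqrt{F}$, which is exactly the hard case (the ray $s=C_0$ meets \emph{every} wall, by the Corollary following Remark~\ref{r:upperbound}). Your claim that bounding $C$ and $R$ and imposing $\Delta(w)\geq 0$ and $\Delta(v-w)\geq 0$ "cuts $w$ down to a finite set" is false: the bounds of Theorem~\ref{t:bound} confine the walls to a compact region but do not bound the radii away from zero, and as $R\to 0$ (walls shrinking onto the point $(C_0,0)$) the rank and other invariants of $w$ are unconstrained. The paper's own examples $(1,2\omega,2)$ and $(1,2\omega,1)$ on a ppas --- which you cite --- exhibit infinitely many actual walls, all with bounded centres and radii and all satisfying both Bogomolov inequalities, so no such finite enumeration exists in general. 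What rules out accumulation at $(C_0,0)$ when $C_0$ is rational is precisely the missing idea: the ray-reversing autoequivalence $\Phi_{q}$, which would convert an infinite sequence of mini-walls with $t\to 0$ into an infinite sequence with $t\to\infty$, contradicting Theorem~\ref{t:bound}. Without that (or some equivalent input beyond numerics), your argument does not close.
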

\begin{proof}
This follows because for such $q$, there is a Fourier-Mukai transform
$\Phi_q$ which reverses the ray (see \cite[Prop
3.2]{MacMeachan}). Then the proposition follows from Theorem \ref{t:bound}.
\end{proof}

As was shown by Bridgeland in \cite{BrK3}, the space of all (good) Bridgeland
stability conditions $\stab(\T)$ is $3$-dimensional and each stability
condition can be written as $\Phi(\A,Z)$ for some Fourier-Mukai
transform $\Phi$. Since $\Phi$ preserves the chamber and wall
structure we have
\begin{thm} Let $(\T,\omega)$ be an irreducible  principally polarized
  abelian surface. The walls in $\stab(\T)$ are nested real hypersurfaces.
\end{thm}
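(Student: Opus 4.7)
The plan is to combine Theorem \ref{t:nestedwalls} applied to the basic locus with Bridgeland's transitive description of $\stab(\T)$ and the fact, stated just above the theorem, that Fourier--Mukai autoequivalences $\Phi$ preserve the wall-and-chamber decomposition of $\stab(\T)$. Since an irreducible principally polarized abelian surface has $\rho(\T) = 1$, we are in the situation where $\NS_\R(\T) = \R\omega$ and $\gamma = 0$, so the half-space $\spc{\omega,\omega}$ collapses to a single half-plane $\Pi = \Plane{\omega,\omega,0}$ parametrized by $(s,t)$ with $t>0$. By Theorem \ref{t:nestedwalls} applied inside $\Pi$, the (real, hence \emph{a fortiori} the integral) walls meet $\Pi$ in a family of nested semicircles.

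Next I would upgrade this slice-statement to a global one. By \cite{BrK3}, $\stab(\T)$ is a $3$-dimensional complex manifold and any $\sigma \in \stab(\T)$ can be written as $\sigma = \Phi\cdot(\A_{\beta,\omega}, Z_{\beta,\omega})$ for a suitable FM autoequivalence $\Phi$ and a suitable basic stability condition, so that $\stab(\T)$ is covered by the translates $\Phi(\Pi)$. Because $\Phi$ sends walls to walls --- a wall $W^v_w$ in the source is carried to the wall $W^{\Phi_*v}_{\Phi_*w}$ in the target, preserving incidence with each chamber --- nesting of walls restricted to $\Pi$ transports to nesting of walls restricted to every $\Phi(\Pi)$. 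Real-codimension-$1$-ness is automatic from the earlier discussion of walls.

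The main obstacle, which is the only substantive point, is to show that nesting in every such $2$-dimensional slice forces the walls, viewed as real hypersurfaces in the $3$-dimensional $\stab(\T)$, to be globally nested. My plan here is to argue by contradiction: if two walls $W_1, W_2$ were not globally nested, then they would either intersect transversally or one would have a boundary component outside the other. In either case, pick a point of the offending locus and, using the transitivity of the autoequivalence action together with Remark \ref{r:upperbound} (which says each point $(s,t)$ with $s < y_1/x$ lies on a \emph{unique} real pseudo-wall in $\Pi$), one can exhibit a slice $\Phi(\Pi)$ passing through that point inside which the two restrictions $W_i \cap \Phi(\Pi)$ meet or nest the wrong way --- contradicting the slice nesting established above. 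This last step is where the subtlety lies; the rest of the argument is an essentially formal consequence of Theorem \ref{t:nestedwalls} and the FM-equivariance of walls.
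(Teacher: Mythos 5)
Your proposal follows essentially the same route as the paper: the paper's entire proof is the remark immediately preceding the theorem, namely that by \cite{BrK3} every (good) stability condition on $\T$ is of the form $\Phi(\A,Z)$ for a Fourier--Mukai transform $\Phi$ and a basic pair, that $\Phi$ preserves the wall-and-chamber structure, and that the walls in the basic half-plane are nested by Theorem \ref{t:nestedwalls} (with $\rho(\T)=1$). The ``slice-to-global'' subtlety you isolate in your final paragraph is not addressed in the paper at all --- the transport of nesting along the $\Phi$-translates is simply asserted --- so you are, if anything, more careful than the source.
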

\begin{remark}
This will hold more generally for abelian surfaces and K3 surfaces
when stated properly but we omit the details.
\end{remark}
\begin{example}
In particular, by Theorem \ref{t:finite}, if $y_1/x-\sqrt{F}$ is rational (ie, when $F$ is a
square rational) then the number of walls in $\stab(X)$ is (globally)
finite. For example, on a ppas $(\T,\omega)$ with $v=(5,3\omega,1)$ we have
$F=4/25$ and then the critical ray is $s=1/5$. 
\end{example}

\subsection{The case of O'Grady's Holomorphic Symplectic Space}
Let us now look at an example where $p\neq1$. In \cite{OGrady}, Kieran
O'Grady showed that the moduli space of Gieseker stable sheaves with
Chern character $(2,0,-2)$ (modulo natural torus actions) gives a
simply connected symplectic holomorphic manifold which was not one of
the (then) known types. 
\begin{prop}\label{p:ogrady}
Let $(X,\omega)$ be an irreducible principally polarized abelian
surface. Then the Chern character $v=(2,2\omega,0)$ has no walls in
$\stab(X)$.
\end{prop}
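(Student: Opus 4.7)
The plan is to rule out any integral $\Z$-critical class for $v=(2,2\omega,0)$, by combining the constraint $c_1=\omega$ forced by Remark \ref{r:cbound} with Bogomolov applied simultaneously to a putative destabiliser and its cokernel, exploiting the rigidity $\NS(X)=\Z\omega$ of an irreducible ppas.

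First I would extract the numerical data. On an irreducible ppas, $\NS(X)=\Z\omega$ with $g=\omega^2=2$, so $\tilde g=2$, $g'=1$, $\gamma=0$ and $d=0$; for $v=(2,2\omega,0)$ this gives $x=2$, $y_1=2$, $z=0$. Equation \ref{e:F} then yields $F=1$, so $y_1/x=1$ and $C_0=0$. A minor nuisance is that $p=\gcd(x,y_1g')=2$, which means the explicit lower bound of Proposition \ref{p:rankplus} is too loose to conclude by itself; the argument must do something sharper.

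For any class $w=(r,c\omega,\chi)\in\NN(X)$ giving a wall, Remark \ref{r:cbound} gives $0<c<2$, forcing $c=1$. The centre formula then collapses to $C=\chi/(2(1-r))$ with $R^2=(C-1)^2-1$, and combining $R^2>0$ with the upper bound $C<y_1/x=1$ of Remark \ref{r:upperbound} forces $C<0$. Because in this framework any genuine destabiliser near the critical ray is a $\mu_\omega$-semistable sheaf (cf.\ Lemma \ref{l:lowrank} and the analogous argument used in Proposition \ref{eg:rankzero}), Bogomolov applies to $w$ and, by running the same argument on the quotient in the destabilising short exact sequence, to $v-w=(2-r,\omega,-\chi)$ as well. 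Moreover since $\omega^2$ is even and every $c_1$ is a multiple of $\omega$, every $\ch_2$ is an integer, so $\chi\in\Z$.

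The heart of the proof is then a short finite case analysis by rank. Bogomolov on $w$ reads $r\chi\leq1$, and on $v-w$ reads $(r-2)\chi\leq1$. For $r\geq2$ the first inequality forces $\chi\leq0$ while $C<0$ demands $\chi>0$, a contradiction; for $r\leq0$ the second inequality forces $\chi\geq0$ while $C<0$ demands $\chi<0$, again a contradiction. The leftover case $r=1$ has $\mu_\omega(w)=\mu_\omega(v)$, so by Remark \ref{r:sameslope} the pseudo-wall degenerates to the vertical line $s=1=y_1/x$, which lies on the boundary where $v\notin\A_{\omega,\omega}$ and hence is not a wall. No wall survives. The principal obstacle is precisely that with $p>1$ the clean bound of Proposition \ref{p:rankplus} is inadequate; the argument bypasses this by coupling the two Bogomolov inequalities with the integrality of $\ch_2$ specific to an irreducible ppas.
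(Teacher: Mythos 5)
Your argument is essentially the paper's own: both use $F=1$, $C_0=0$ and Remark \ref{r:cbound} to force $c_1(K)=\omega$, exclude $r=1$ as the degenerate vertical line of Remark \ref{r:sameslope}, deduce $C<0$ from $R^2=(C-1)^2-1>0$ together with $C<1$, and then contradict the Bogomolov inequality $r\chi\leq 1$ for a $\mu_\omega$-semistable destabiliser of rank $\geq 2$. Two small remarks. First, your $r\leq 0$ case is vacuous (a subobject of the torsion-free sheaf $E$ in the tilted heart is a sheaf in $T_s$, hence has positive rank), which is just as well, since applying Bogomolov to the class $v-w$ of the quotient is not immediate: the quotient in $\A_s$ need not be a sheaf. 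Second, the statement concerns all of $\stab(X)$, not just the basic stability conditions your coordinates describe; the paper closes this by citing Bridgeland's result that every stability condition on an abelian surface is a Fourier--Mukai image of a basic one, and that Fourier--Mukai transforms preserve the wall structure. You should add that one-line reduction.
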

\begin{proof}
Since $\stab(X)$ is generated by Fourier-Mukai transforms it suffices
to prove this for basic Bridgeland stability conditions. Furthermore, it suffices
to show that any $\mu_\omega$-semistable sheaf $E$ which is also
$\mu_Z$-stable for some $Z$ is $\mu_Z$-stable for all $Z$.
Note that $F=1$ for this choice of $v$ and $C_0=0$. So Remark
\ref{r:cbound} gives
$0<c<2$
and then $c_1(K)=\omega$ for any destabilizing object $K$ in any
$\A_s$. We also must have $r(K)>1$ because otherwise $xc_1-y_1r=0$. As
usual, we can assume that $K$ is $\mu_\omega$-semistable. On
the other hand, the  centre  of the wall 
corresponding to $(r,\omega,k)$ is
$\dfrac{-\chi}{2r-2}<C_0=0$.
But then $\chi>0$ and that contradicts the Bogomolov inequality for $K$.
\end{proof}

\subsection{A Conjecture}
The examples above and Theorem \ref{t:nestedwalls} and its corollary
suggest the following:
\begin{conjecture}
For any smooth complex projective surface $X$ and any $v\in N(X)$ the
complement of the walls in the space of good stability conditions
union the boundary of its closure is path-connected.
\end{conjecture}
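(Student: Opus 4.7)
The plan is to attack the conjecture in three stages: first reduce to basic Bridgeland stability conditions, then establish path-connectedness within the three-dimensional slice $\spc{\beta,\omega}$ using the nested wall structure, and finally glue across the full $\stab(X)$ using the known group action.

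For the reduction, I would appeal to the $\widetilde{GL}^+(2,\R)$-action on $\stab(X)$, which preserves the chamber and wall decomposition; it therefore suffices to prove the path-connectedness for a transverse slice, which may be taken as the locus of basic stability conditions $(\A_{\beta,\omega},Z_{\beta,\omega})$ with $(\beta,\omega)$ in a suitable real parameter space. The closure of this open locus carries a ``$t=0$'' boundary where the tilt construction degenerates, and it is this boundary which is included in the statement.

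Within a three-dimensional slice $\spc{\beta,\omega}$ with coordinates $(s,u,t)$ and $t>0$, the walls are two-dimensional surfaces whose intersection with each $u$-slice is the nested family of semicircles from \propref{p:nestedwalls}, with radii bounded above by \thmref{t:bound}. The key observation is that, by \lemref{l:radiuscentre} together with Equation \ref{e:F}, the radius of a given wall depends on $u$ through the quantity $F$, which is quadratic in $u-y_2/x$. Varying $u$ therefore causes each wall to shrink and eventually pinch to a point on $t=0$ at the value where $F$ first equals $(C-y_1/x)^2$. Consequently, any chamber strictly nested inside some wall at a given $u_0$ can be connected, by a path in $(s,u,t)$-space, to the adjacent outer chamber by traversing the value of $u$ at which the enclosing wall degenerates; the path touches the boundary $t=0$ only at the pinching moment, which is allowed since that boundary is included in the statement. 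Iterating on the nesting depth, together with the unboundedness of the outer chamber in $s$, then connects every chamber to a common one.

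The main obstacle is twofold. Firstly, one must show that the pinching value of $u$ always lies in the admissible range of the slice; if the enclosing wall has $F$ bounded below by a positive constant on the admissible set of $u$, one must instead vary $\omega$ or $\beta$ transversely to find a deformation that destroys the wall, and controlling how the wall collection transforms under such a deformation will be delicate. Secondly, passing from the three-dimensional slices to all of $\stab(X)$ requires understanding how adjacent slices $\spc{\beta,\omega}$ and $\spc{\beta',\omega'}$ glue together across changes of the reference class $\gamma$; in the Picard rank $1$ case this is essentially automatic, but in general it needs a global description of $\stab(X)$ presently available only for K3 and abelian surfaces (\cite{BrK3}). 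Identifying these two obstacles accurately for arbitrary projective surfaces is precisely why the statement remains a conjecture.
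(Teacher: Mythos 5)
The statement you are attempting is presented in the paper as a \emph{conjecture}: the paper offers no proof, only the closing remark that it holds for abelian surfaces and for the subspace of basic stability conditions on any surface. So there is no proof in the paper to compare against, and your proposal must stand on its own. It does not, for two concrete reasons.

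First, your central mechanism --- that varying $u$ makes each wall ``shrink and eventually pinch to a point on $t=0$'' --- is false in general. The radius satisfies $R^2=(C-y_1/x)^2-F$, and \emph{both} terms are quadratic in $u$: $F$ has leading coefficient $d/g$ in $u$, but $C-y_1/x$ is itself linear in $u$ with slope $d(xc_2-ry_2)/\bigl(g(xc_1-ry_1)\bigr)$, so $(C-y_1/x)^2$ can grow faster than $F$ and the radius can increase without bound as $u$ varies. The paper's own product-abelian-surface example (Figure 2, and Note \ref{n:summary}(4)) exhibits walls that are \emph{unbounded} parabolas in the $(u,s)$-plane, i.e.\ they never pinch. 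Worse, when $\rho(X)=1$ or $\gamma=0$ one has $d=0$, $F$ is independent of $u$, and there is no $u$-direction to vary at all, so your strategy is vacuous in precisely the simplest case. The correct (and much simpler) observation for the basic locus is the one the paper records as a corollary of Theorem \ref{t:nestedwalls}: nesting forces every chamber in $\Plane{\beta,\omega,u}$ to be path-connected to the $t=0$ axis, and since that boundary is included in the statement of the conjecture, all chambers are connected through it --- no pinching argument is needed. Second, your reduction step assumes that the full space of good stability conditions is swept out by the $\widetilde{GL}^+(2,\R)$- and Fourier--Mukai orbits of the basic locus; this is known only for abelian and K3 surfaces (\cite{BrK3}), which is exactly why the paper can assert the conjecture only there. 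You correctly flag this as an obstacle, but flagging it does not close it: as written, the proposal establishes nothing beyond what the paper already states without proof, and its main new mechanism is contradicted by the paper's own examples.
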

In other words, any $\sigma$-stable moduli spaces survives to the
boundary of $\stab(X)$. The conjecture holds for abelian surfaces and
holds for the subspace of basic stability conditions for any surface.


\bibliographystyle{alpha}
\bibliography{p60}

\end{document}